\numberwithin{equation}{section}
\newtheorem{theorem}{Theorem}[section]
\newtheorem{definition}[theorem]{Definition}
\newtheorem{lemma}[theorem]{Lemma}
\newtheorem{proposition}[theorem]{Proposition}
\newtheorem{remark}[theorem]{Remark}
\newtheorem{question}[theorem]{Question}
\begin{document}
\title[Hochschild cohomology for free semigroup algebras]{Hochschild cohomology for free semigroup algebras}

\author{Linzhe Huang}
\address{Linzhe Huang, Beijing Institute of Mathematical Sciences and Applications, Beijing, 101408, China}
\email{huanglinzhe@bimsa.cn}
\author{Minghui Ma}
\address{Minghui Ma, School of Mathematical Sciences, Dalian University of Technology, Dalian, 116024, China}
\email{minghuima@dlut.edu.cn}
\author{Xiaomin Wei}
\address{Xiaomin Wei, School of Physical and Mathematical Sciences, Nanjing Tech University, Nanjing, 211816, China}
\email{wxiaomin@njtech.edu.cn}
\maketitle

\begin{abstract}
This paper focuses on the cohomology of operator algebras associated with the free semigroup generated by the set $\{z_{\alpha}\}_{\alpha\in\Lambda}$, with the left regular free semigroup algebra $\mathfrak{L}_{\Lambda}$ and the non-commutative disc algebra $\mathfrak{A}_{\Lambda}$ serving as two typical examples.
We establish that all derivations of these algebras are automatically continuous.
By introducing a novel computational approach, we demonstrate that the first Hochschild cohomology group of $\mathfrak{A}_{\Lambda}$ with coefficients in  $\mathfrak{L}_{\Lambda}$ is zero.
Utilizing the Ces\`aro operators and conditional expectations, we show that the first normal cohomology group of $\mathfrak{L}_{\Lambda}$ is trivial.
Finally, we prove that the higher cohomology groups of the non-commutative disc algebras with coefficients in the complex field vanish when $|\Lambda|<\infty$.
These methods extend to compute the cohomology groups of a specific class of operator algebras generated by the left regular representations of cancellative semigroups, which notably include Thompson's semigroup.
\end{abstract}

{\bf Key words.} Free semigroup algebras, the non-commutative disc algebra, derivation, the Hochschild cohomology group, the normal cohomology group

{\bf MSC.} 47L10, 46K50, 47B47

\section{Introduction}

In their paper \cite{DP99}, Davidson and Pitts introduced a novel category of intriguing operator algebras, termed as {\em free semigroup algebras}, and examined their invariant subspace structure.
Recently, many significant progresses have been made in the study of free semigroup algebras.
In \cite{DKP01}, Davidson, Katsoulis and Pitts  presented a comprehensive structure theorem applicable to all free semigroup algebras.
Later in \cite{DLP05}, Davidson, Li and Pitts  established a Kaplansky density theorem for free semigroup algebras.
These algebras
 are weak-operator closed and generated by a (countable or uncountable) set of isometries $\{L_{\alpha}\}_{\alpha\in\Lambda}$  with pairwise orthogonal ranges, acting on a Hilbert space.
These conditions can be expressed algebraically as follows:
\begin{align*}
    L_{\alpha}^*L_{\alpha}=I\quad\text{for}~ \alpha\in\Lambda\quad\quad\text{and}\quad \quad \sum_{\alpha\in\Lambda}L_{\alpha}L_{\alpha}^*\leqslant I,
\end{align*}
where $\sum_{\alpha\in\Lambda}$ denotes the sum in the strong-operator topology.
A typical example of free semigroup algebras is the non-self-adjoint algebra $\mathfrak{L}_{\Lambda}$ generated by the left regular representation of  $\mathbb{F}^+_{\Lambda}$ acting on the Hilbert space $\ell^2(\mathbb{F}^+_{\Lambda})$.
This representation is a commonly used method to construct operator algebras, such as the hyperfinite $\mathrm{II}_1$ factor and free group factors.
The algebra $\mathfrak{L}_{\Lambda}$ is referred to as the {\em left regular free semigroup algebra}, or alternatively as the {\em non-commutative analytic Toeplitz algebra}. 
The study of this algebra is initiated by  Popescu \cite{Pop89a,Pop89b,Pop95}.
Up to now, numerous studies have been conducted on this algebra, and we refer to existing literatures e.g. \cite{AP95,AP00,DP98a,DP98b,DP99,Ken13} for further exploration.
The unital norm closed algebra $\mathfrak{A}_{\Lambda}$ generated by the set of isometries $\{L_{\alpha}\}_{\alpha\in\Lambda}$ with pairwise orthogonal ranges is referred to as the {\em non-commutative disc algebra} by Popescu \cite{Pop91}.
He computed the first cohomology group of this Banach algebra with coefficients in the complex field, demonstrating its non-amenability \cite{Pop96}.
Clou\^atre, Martin and Timko \cite{CMT22} investigated analytic functionals for the non-commutative disc algebra.

The Hochschild cohomology theory of operator algebras has a history spanning over half a century, see e.g. \cite{CPSS03,Joh72,Kad66,PS10,Sak66} for the development.
The cohomology groups serve as isomorphism invariants, aiding in the classification of operator algebras.
Particularly, the first cohomology groups characterize the structures of derivations, which is a vast and intricate subject that experienced significant development during the 1960s and 1970s, and has a close relation with quantum physics \cite{KL14}.

Research on cohomology in von Neumann algebras is extensive.
In a series of papers \cite{J,JKR,KR,KR2}, Johnson, Kadison and Ringrose proved that the cohomology groups $\mathscr{H}^{n}(\mathcal{M},\mathcal{M})$ are zero for all $n\geqslant 1$ when $\mathcal{M}$ is a hyperfinite von Neumann algebra.
In \cite{SS}, Sinclair and Smith showed that $\mathscr{H}^{n}(\mathcal{M},\mathcal{M})=0$ for von Neumann algebras with Cartan subalgebras and separable preduals.
Christensen et al. \cite{CPSS03} proved that $\mathscr{H}^n(\mathcal{M},\mathcal{M})=0$ and $\mathscr{H}^n(\mathcal{M}, B(\mathcal{H}))=0$ for type $\mathrm{II}_1$ factors with property $\Gamma$.
In \cite{PS10}, Pop and Smith proved that $\mathscr{H}^{2}(\mathcal{M},\mathcal{M})=0$ for type $\mathrm{II}_1$ von Neumann algebras that are not prime.
A remaining significant issue pertains to the higher cohomology groups of free group factors.
In computations of the cohomology groups of von Neumann algebras, the normal cohomology is a significant tool as $\mathscr{H}_w^n(\mathcal{M},\mathcal{M})=\mathscr{H}^n(\mathcal{M},\mathcal{M})$, see e.g. \cite{SS95}.
Studies on cohomology in Banach algebras are deeply related to the amenability of groups.
Johnson \cite{Joh72} proved that a locally compact group $G$ is amenable if and only if $\mathscr{H}^1(\ell^1(G),\mathcal{X}^*)=0$ for all Banach $\ell^1(G)$-bimodule $\mathcal{X}$.
Another topic of cohomology theory is the automatic continuity of derivations.
Kaplansky \cite{Kap53} conjectured that all derivations of a $C^*$-algebra are automatically continuous and Sakai \cite{Sak60} solved this problem.
Kadison \cite{Kad66} proved that all derivations of a $C^*$-algebra are normal.
Johnson and Sinclair \cite{JS68} showed that all derivations of a semisimple Banach algebra are continuous.

In this paper, instead of studying cohomology of abstract Banach algebras, we focus on two typical objects: the left regular free semigroup algebra $\mathfrak{L}_{\Lambda}$ and the non-commutative disc algebra $\mathfrak{A}_{\Lambda}$.
We define a total order on the free semigroup $\mathbb{F}_{\Lambda}^+$, which is well-ordered and multiplication-invariant.
The order is interesting in itself and can be used to prove the semisimplicity of these algebras, see also \cite[Theorem 1.7]{DP99}.
Thus we can immediately obtain that all derivations of these algebras are continuous by \cite[Theorem 4.1]{JS68}.

We develop an original method to compute the first continuous cohomology group of $\mathfrak{A}_{\Lambda}$ with coefficients in $\mathfrak{L}_{\Lambda}$.
Let $\{L_{\alpha}\}_{\alpha\in\Lambda}$ be the generators of $\mathfrak{A}_{\Lambda}$ (see \S \ref{sec:Banach algebras associated with free semigroup}).
Then every continuous derivation $D$ from $\mathfrak{A}_{\Lambda}$ into $\mathfrak{L}_{\Lambda}$ is determined by $\{D(L_{\alpha})\}_{\alpha\in\Lambda}$.
We prove that $D$ is an inner derivation by the following three steps:
\begin{enumerate}[(i)]
\item  By extending the averaging method used for computing cohomology groups of von Neumann algebras to free semigroup algebras, we show that $D(L_w)=S_w-L_w^*S_wL_w$ for some $S_w\in\mathfrak{L}_{\Lambda}$, $\forall w\in\mathbb{F}_{\Lambda}^+$;

\item The second step is the difference between the computations of cohomology groups of self-adjoint algebras and non-self-adjoint algebras as $L_w^*$ does not belong to $\mathfrak{L}_{\Lambda}$.
    We do further efforts to show that $S_w$ can be replaced by $L_wT_w$ for some $T_w\in \mathfrak{L}_{\Lambda}$.
    Hence  $D$ is {\em locally inner} at $L_w$, i.e., $D(L_w)=L_wT_w-T_wL_w$;

\item Suppose that $D(L_{\alpha})=0$.
    For any $\beta\in\Lambda\setminus\{\alpha\}$, we prove that $D(L_{\beta})=L_{\beta}T_{\beta}-T_{\beta}L_{\beta}$ for some $T_{\beta}\in \mathfrak{L}_{\Lambda}$ with $[T_{\beta},L_{\alpha}]=0$.
    Furthermore, assuming $D(L_{\alpha})=D(L_{\beta})=0$, we establish that $D(L_{\gamma})=0$ for any $\gamma\in\Lambda\setminus\{\alpha,\beta\}$.
\end{enumerate}
In step (iii), there is no limit to the cardinality of the index set $\Lambda$.
Therefore, we allow the number of generators to be an uncountable set.
The above ideas and methods have been applied to compute the cohomology groups of the operator algebra associated with Thompson's semigroup in a recent companion paper \cite{Hua21}.
This part of work is inspired by a question proposed in \cite[Section 6]{Hua21}.
The free semigroup and Thompson's semigroup (see e.g. \cite{GMQ,Xue21}) are both cancellative semigroups.
We believe that our techniques are applicable to investigate the cohomology of a wide class of operator algebras generated by left regular representations of amenable or non-amenable cancellative semigroups.
We also hope that our work could bring inspiration to the cohomology problem of free group factors.

The left regular free semigroup algebra adopts the weak-operator topology induced by $B(\ell^2(\mathbb{F}_{\Lambda}^+))$.
We define the normal cohomology of this weak-operator closed algebra, which is an analogue of the concept of normal cohomology in von Neumann algebras.
Applying the Ces\`{a}ro operators (see \cite[Lemma 1.1]{DP99}), we prove that the first normal cohomology group $\mathscr{H}_{w}^1(\mathfrak{L}_{\Lambda},\mathfrak{L}_{\Lambda})$ is zero.

Popescu proved  $\mathscr{H}^1(\mathfrak{A}_{\Lambda},\mathbb{C})\cong (\mathbb{C}^{|\Lambda|},+)$ when $|\Lambda|<\infty$ and $\mathscr{H}^1(\mathfrak{A}_{\Lambda},\mathbb{C})\cong (\ell^2(\Lambda),+)$ when $|\Lambda|=\infty$.
Inspired by his work, we study the higher cohomology groups  $\mathscr{H}^n(\mathfrak{A}_{\Lambda},\mathbb{C})$ for $n\geqslant2$.
Given an $n$-cocycle $\phi$, we construct an $(n-1)$-linear map $\psi$ such that $\phi=\partial^{n-1}\psi$ by a cutting operation on $\mathbb{F}_{\Lambda}^+$.
In a word, we prove that $\mathscr{H}^n(\mathfrak{A}_{\Lambda},\mathbb{C})=0$ for $n\geqslant2$ when $|\Lambda|<\infty$.
The finiteness of $\Lambda$ assures the boundedness of the $(n-1)$-linear map $\psi$.

The paper is organized as follows: In \S \ref{sec:free semigroup}, we introduce some  basic properties of the free semigroup and  the left regular free semigroup algebra.
In \S \ref{sec:continuity derivation}, we establish that all derivations of the left regular free semigroup algebra and the non-commutative disc algebra are automatically continuous.
In \S \ref{sec:first cohomology}, we prove that the first cohomology of $\mathfrak{A}_{\Lambda}$ into $\mathfrak{L}_{\Lambda}$ vanishes.
In \S \ref{sec:normal cohomology}, we show that the first normal cohomology group of $\mathfrak{L}_{\Lambda}$ is trivial.
In \S \ref{sec:higher order cohomology}, we prove that the higher cohomology group of $\mathfrak{A}_{\Lambda}$ with coefficients in complex field are trivial when $|\Lambda|<\infty$.

\textbf{Acknowledgements.} We are deeply grateful to Professor Liming Ge for his continuous encouragement and invaluable discussions.

\section{Operator algebras associated with free semigroups}\label{sec:free semigroup}
In this section, we introduce some fundamental properties of free semigroups and free semigroup algebras, which will be utilized in the computation of cohomology.

\subsection{The free semigroups}
Let $\mathbb{F}^+_{\Lambda}$ be the free semigroup generated by $\{z_{\alpha}\}_{\alpha\in\Lambda}$, where the non-empty index set $\Lambda$ can be countable or uncountable.
A word in $\mathbb{F}^+_{\Lambda}$ is a juxtaposition of the symbols $\{z_{\alpha}\}_{\alpha\in\Lambda}$ in any order with any number of repetitions.
The {\em length} of a word $w=z_{\alpha_1}\cdots z_{\alpha_n}$ is defined to be $|w|=n$.
The length of the unit $e$ (the empty word) in $\mathbb{F}^+_{\Lambda}$ is zero.
For any words $w,u\in\mathbb{F}^+_{\Lambda}$, we say $w$ is {\em left divided} by $u$, denoted by $u|w$, if there exists a word $v\in\mathbb{F}^+_{\Lambda}$ such that $w=uv$; otherwise we write $u\nmid w$.
We say $w$ is {\em right divided} by $u$, written as $u\ddag w$, if there exists a word $v\in\mathbb{F}^+_{\Lambda}$ such that $w=vu$.
These two relations are partial orders on $\mathbb{F}^+_{\Lambda}$.
We refer to \cite{Xue21} for an interesting non-commutative arithmetic theory of cancellative semigroups, where the division relations are well investigated.

The free semigroup $\mathbb{F}^+_{\Lambda}$ is highly non-commutative.
One can check by induction on the length of words that for any $w,u\in\mathbb{F}^+_{\Lambda}$, if $wu=uw$, then $w=v^m$ and $u=v^n$ for some $v \in\mathbb{F}^+_{\Lambda}$ and integers $m,n\geqslant 0$.
Now we present the following technical and useful lemma.

\begin{lemma}\label{lem:commutative}
Let $w,u,v\in\mathbb{F}^+_{\Lambda}$ with $w\ne e$.
If there is a positive integer $k\geqslant \frac{|u|}{|w|}  +1$ such that $vw^{k}=w^{k}u$, then $uw=wu$ and $u=v$.
\end{lemma}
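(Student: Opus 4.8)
The plan is to read the single word equation $vw^{k}=w^{k}u$ combinatorially and extract a periodicity statement, from which both conclusions fall out. Throughout write $W:=vw^{k}=w^{k}u$ and denote by $W(i)$ its $i$-th letter. First I would settle lengths and the trivial case: comparing lengths in $vw^{k}=w^{k}u$ gives $|v|=|u|$, and if $u=e$ then $v=e$ and both assertions are immediate, so I may assume $u\neq e$, hence $v\neq e$.

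The core step is to show that $W$ is \emph{periodic with period} $|w|$, i.e. $W(i)=W(i+|w|)$ for every admissible $i$. The block $w^{k}$ occurs in $W$ both as a prefix (from $W=w^{k}u$) and as a suffix (from $W=vw^{k}$, beginning at position $|v|+1=|u|+1$). Each occurrence is internally periodic with period $|w|$, so $W(i)=W(i+|w|)$ holds for $i$ in the prefix range $[1,(k-1)|w|]$ and for $i$ in the suffix range $[|u|+1,(k-1)|w|+|u|]$. These two ranges cover the whole admissible interval $[1,|W|-|w|]=[1,(k-1)|w|+|u|]$ exactly when they overlap with no gap, i.e. when $|u|\leqslant(k-1)|w|$; and this is precisely the hypothesis $k\geqslant\frac{|u|}{|w|}+1$. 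Hence $W$ is periodic with period $|w|$ throughout, so $W$ coincides with the length-$|W|$ prefix of the infinite periodic word $w^{\infty}$.

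Granting this periodicity, the equality $u=v$ drops out. Indeed $v$ is the length-$|u|$ prefix of $W$, hence the length-$|u|$ prefix of $w^{\infty}$; and $u=W(k|w|+1)\cdots W(|W|)$, which by periodicity (using that $|w|$ divides $k|w|$) equals $W(1)\cdots W(|u|)$, again the length-$|u|$ prefix of $w^{\infty}$. Since $|u|=|v|$ these prefixes coincide, so $u=v$. Substituting $v=u$ back into the hypothesis yields $uw^{k}=w^{k}u$, so $u$ commutes with $w^{k}$. By the commuting-word fact recalled just before the lemma, $u$ and $w^{k}$ are then powers of a common word; comparing primitive roots (the primitive root of $w^{k}$ equals that of $w$) shows that $u$ and $w$ are powers of a common primitive word, whence $uw=wu$.

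The step I expect to be the main obstacle is establishing the periodicity, and in particular verifying that the prefix and suffix occurrences of $w^{k}$ overlap by at least one full period $|w|$: this is exactly where the numerical hypothesis $k\geqslant\frac{|u|}{|w|}+1$ is consumed, and without it the two occurrences could fail to glue into a single global period (so that $u=v$ and $uw=wu$ may genuinely break down). The remaining passages — the length count, the extraction of $u=v$ from periodicity, and the implication that commuting with a nonzero power forces commuting with the base — are routine once the periodicity is in hand.
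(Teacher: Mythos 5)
Your proof is correct, but it takes a genuinely different route from the paper's. You read the equation $vw^{k}=w^{k}u$ as exhibiting two occurrences of $w^{k}$ inside $W=vw^{k}=w^{k}u$ (a prefix and a suffix) and glue their internal $|w|$-periodicities into a single global period of $W$; the hypothesis $k\geqslant\frac{|u|}{|w|}+1$ is consumed exactly as the condition $(k-1)|w|\geqslant|u|$ that makes the two ranges overlap, after which $W$ is a prefix of $w^{\infty}$ and both conclusions follow. The paper instead argues by explicit factorization and cancellation: with $k_{0}=\lceil\frac{|u|}{|w|}\rceil$ it extracts $w^{k_{0}}=u_{1}u$ with $|u_{1}|<|w|$, writes $w=u_{1}u_{2}$ and $u=u_{2}w^{k_{0}-1}$, and cancels in $vw^{k}=w^{k}u$ to reach $u_{1}u_{2}=u_{2}u_{1}$. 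Your periodicity argument is arguably cleaner and makes the role of the numerical hypothesis more transparent, while the paper's stays entirely inside the one combinatorial fact it has recalled. One remark on your final step: deducing $uw=wu$ from $uw^{k}=w^{k}u$ via uniqueness of primitive roots uses the implication $x^{i}=y^{j}\Rightarrow xy=yx$, which is standard but is not the fact recalled before the lemma (that one goes in the direction $xy=yx\Rightarrow$ common root). This detour is avoidable within your own framework: since $uw^{k}=W$ is a prefix of $w^{\infty}$, the word $uw$ is the length-$(|u|+|w|)$ prefix of $w^{\infty}$; and since $u$ is the length-$|u|$ prefix of $w^{\infty}$, the word $wu$ is the length-$(|u|+|w|)$ prefix of $w\cdot w^{\infty}=w^{\infty}$ as well, so $uw=wu$ directly.
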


\begin{proof}
The case $u=e$ is trivial.
We assume that $u\ne e$.
Let $k_0=\lceil\frac{|u|}{|w|}\rceil\footnote{$\lceil x\rceil$ is the ceiling function maps $x$ to the smallest integer greater than or equal to $x$.} \geqslant 1$.
Then $k\geqslant k_0+1$ and $k_0|w|\geqslant|u|$.
It follows from $vw^{k-k_0}w^{k_0}=vw^k=w^ku$ that $w^{k_0}=u_1u$ for some $u_1\in\mathbb{F}^+_{\Lambda}$.
Since
\begin{equation*}
  |u_1|+|u|=k_0|w|<\bigg(\frac{|u|}{|w|}+1\bigg)|w|=|u|+|w|,
\end{equation*}
i.e., $|u_1|<|w|$, we can write $w=u_1u_2$ for some $u_2\in\mathbb{F}^+_{\Lambda}$.
Then $u=u_2w^{k_0-1}$.
Since
\begin{equation*}
  vw^{k-k_0-1}u_1u_2u_1u_2w^{k_0-1}=vw^k=w^ku=w^{k-1}u_1u_2u_2w^{k_0-1},
\end{equation*}
by the cancellation law, we can get $vw^{k-k_0-1}u_1u_2u_1=w^{k-1}u_1u_2$.
Therefore, $u_2u_1=u_1u_2$.
Thus, $w=u_1u_2$ commutes with $u=u_2(u_1u_2)^{k_0-1}$ and hence $u=v$.
\end{proof}

\subsection{The free semigroup algebras}\label{sec:Banach algebras associated with free semigroup}
Let $\mathcal{H}_{\Lambda}$ be the Hilbert space $\ell^2(\mathbb{F}^+_{\Lambda})$ consisting of all square summable functions on the free semigroup $\mathbb{F}^+_{\Lambda}$, which serves as the natural Hilbert space for the left regular representation.
The family of functions $\{\xi_w\colon w\in\mathbb{F}^+_{\Lambda}\}$ forms an orthonormal basis for $\mathcal{H}_{\Lambda}$, where $\xi_w$ is the indicator function taking value $1$ at the point $w$ and $0$ elsewhere.
The generators of the left regular representation, denoted by $L_{\alpha}$, act as $L_{\alpha}\xi_{w}=\xi_{z_{\alpha}w}$ for $w\in\mathbb{F}_{\Lambda}^+$.
They are isometries with pairwise orthogonal ranges acting on $\mathcal{H}_{\Lambda}$.
More precisely, we have
\begin{align*}
  L_{\alpha}^*L_{\beta}=\delta_{\alpha,\beta}I\quad\text{for}~ \alpha,\beta\in\Lambda\quad\quad\text{and}\quad \quad \sum_{\alpha\in\Lambda}L_{\alpha}L_{\alpha}^*=I-P_e,
\end{align*}
where $P_e$ is the rank one projection onto $\mathbb{C}\xi_e$.
The weak-operator closed  algebra $\mathfrak{L}_{\Lambda}$ generated by $\{L_{\alpha}\}_{\alpha\in\Lambda}$ is called the {\em left regular free semigroup algebra}, and the norm closed algebra $\mathfrak{A}_{\Lambda}$ generated by $\{L_{\alpha}\}_{\alpha\in\Lambda}$ is called the {\em non-commutative disc algebra}.
The non-commutative disc algebra is a proper Banach subalgebra of the left regular free semigroup algebra.

The generators $\{L_{\alpha}\}_{\alpha\in\Lambda}$ induce the action of all words.
For $u=z_{\alpha_1}z_{\alpha_2}\cdots z_{\alpha_n}\in\mathbb{F}^+_{\Lambda}$, let
\begin{align*}
    L_u=L_{\alpha_1}L_{\alpha_2}\cdots L_{\alpha_n},
\end{align*}
which gives $L_u\xi_{w}=\xi_{uw}$.
The operator $L_u$ is an isometry on $\mathcal{H}_{\Lambda}$ with range $\ell^2(u\mathbb{F}^+_{\Lambda})$, where $u\mathbb{F}^+_{\Lambda}=\{w\in\mathbb{F}^+_{\Lambda}\colon u|w\}$.
Moreover, for $u,v\in\mathbb{F}^+_{\Lambda}$, we have
\begin{align*}
    L_u^*L_v=\begin{cases}
        L_{u^{-1}v}&u|v,\\
        L_{v^{-1}u}^*&v|u,\\
        0&\text{otherwise}.
    \end{cases}
\end{align*}
For any two functions $\varphi,\psi\in\mathcal{H}_{\Lambda}$, their {\em convolution} is defined by
\begin{equation*}
\varphi *\psi(w)=\sum\limits_{u\in \mathbb{F}^+_{\Lambda},u|w} \varphi(u)\psi(u^{-1}w),
\end{equation*}
Since the number of left divisors of $w$ is $|w|+1<\infty$, the above sum is a finite sum.
For each $\varphi\in\mathcal{H}_{\Lambda}$, the convolution induces an operator $L_{\varphi}$ on $\mathcal{H}_{\Lambda}$ given by
\begin{align*}
    L_{\varphi}\psi=\varphi *\psi,
\end{align*}
which is called the {\em left convolution operator} induced by $\varphi$.
Notice that $L_{\varphi}$ may be unbounded since $\varphi *\psi$ is not always square summable.
We shall note that the three notations $L_{\alpha},L_u,L_{\varphi}$ are consistent:
\begin{align*}
  L_{\alpha}=L_{z_{\alpha}}=L_{\xi_{z_{\alpha}}}\quad\text{and}\quad    L_u=L_{\xi_u}.
\end{align*}
We denote the set of all bounded left convolution operators by
\begin{equation*}
  \mathscr{L}_{\Lambda}=\{L_{\varphi}\in B(\mathcal{H}_{\Lambda})\colon\varphi \in \mathcal{H}_{\Lambda}\}.
\end{equation*}
For two left convolution operators $L_{\varphi},L_{\psi}\in \mathscr{L}_{\Lambda}$, their composition is $L_{\varphi}L_{\psi}=L_{\varphi* \psi}\in \mathscr{L}_{\Lambda}$.
Similarly, we define the {\em right convolution operator} induced by $\varphi$ by
\begin{align*}
    R_{\varphi}\psi=\psi*\varphi.
\end{align*}
The set of all bounded right convolution operators is denoted by
\begin{equation*}
  \mathscr{R}_{\Lambda}=\{R_{\varphi}\in B(\mathcal{H}_{\Lambda})\colon\varphi\in\mathcal{H}_{\Lambda}\}.
\end{equation*}
For two right convolution operators $R_{\varphi},R_{\psi}\in \mathscr{R}_{\Lambda}$, their composition is $R_{\varphi}R_{\psi}=R_{\psi* \varphi}\in \mathscr{R}_{\Lambda}$.

For any subset $\mathcal{A}$ of $B(\mathcal{H}_{\Lambda})$, its {\em commutant} is defined as
\begin{equation*}
  \mathcal{A}'=\{T\in B(\mathcal{H}_{\Lambda})\colon TA=AT,~\forall A\in\mathcal{A}\}.
\end{equation*}
We prove the following result following \cite[Theorem 6.7.2]{Kadison}.

\begin{proposition}\label{prop:commutant}
 The following relations hold:
 \begin{equation*}
\mathscr{L}_{\Lambda}'=\mathscr{R}_{\Lambda},\quad
\mathscr{R}_{\Lambda}'=\mathscr{L}_{\Lambda}.
\end{equation*}
\end{proposition}

\begin{proof}
It is clear that $\mathscr{L}_{\Lambda}\subseteq\mathscr{R}_{\Lambda}'$.
Suppose that $T\in\mathscr{R}_{\Lambda}'$.
For $v\in\mathbb{F}^+_{\Lambda}$, we consider the following two linear functionals on $\mathcal{H}_{\Lambda}$:
\begin{align*}
   \psi\mapsto(T\psi)(v)\quad\text{and}\quad \psi\mapsto(T\xi_e*\psi)(v).
\end{align*}
Clearly, the first linear functional is bounded.
The second one is also bounded since
\begin{align*}
    |(T\xi_e* \psi)(v)|\leqslant\sum_{v_1v_2=v}|T\xi_e(v_1)||\psi(v_2)|\leqslant \|T\xi_e\|_2\|\psi\|_2.
\end{align*}
For any $u\in \mathbb{F}^+_{\Lambda}$, we have
\begin{equation*}
  (T\xi_u)(v)=(TR_{u}\xi_{e})(v)=(R_{u}T\xi_{e})(v)=(T\xi_{e}*\xi_u)(v).
\end{equation*}
Since the two continuous linear functionals take the same values at each member of the orthonormal basis $\{\xi_{u}\}_{u\in\mathbb{F}^+_{\Lambda}}$, we have
\begin{equation*}
  (T\psi)(v)=(T\xi_e*\psi)(v)
\end{equation*}
for any $\psi\in\mathcal{H}_{\Lambda}$.
It follows that $T=L_{T\xi_e}\in\mathscr{L}_{\Lambda}$.
Therefore, $\mathscr{R}_{\Lambda}'=\mathscr{L}_{\Lambda}$, and symmetrically $\mathscr{L}_{\Lambda}'=\mathscr{R}_{\Lambda}$.
\end{proof}

\begin{remark}
Proposition \ref{prop:commutant} also appears in  \cite{Pop95} (see Proposition 1.1 and Theorem 1.2 of \cite{Pop95}).
\end{remark}

\begin{remark}
Let $\mathcal{S}$ be a cancellation semigroup.
\begin{enumerate}[$(i)$]
\item For any cancellative semigroup $\mathcal{S}$, let $\mathscr{L}(\mathcal{S})$ (resp.  $\mathscr{R}(\mathcal{S})$) be the algebra consisting of all bounded left (resp. right) convolution operators.
    Then $\mathscr{L}(\mathcal{S})$ and $\mathscr{R}(\mathcal{S})$ are the commutants of each other.

\item Let $\mathfrak{L}(\mathcal{S})$ be the weak-operator closed algebra generated by $\{L_u\colon u\in\mathcal{S}\}$, which is a subalgebra of $\mathscr{L}(\mathcal{S})$.
    It is well known that $\mathfrak{L}(\mathcal{S})=\mathscr{L}(\mathcal{S})$ when $\mathcal{S}$ is a group.
    But we do not know if they are equal for every cancellation semigroup $\mathcal{S}$.
    We would like to know whether this holds for Thompson's semigroup
\end{enumerate}
\end{remark}

From Proposition \ref{prop:commutant}, we know that $\mathscr{L}_{\Lambda}$ is a weak-operator closed algebra containing $\{L_{\alpha}\}_{\alpha\in\Lambda}$.
Hence,  $\mathscr{L}_{\Lambda}$ contains $\mathfrak{L}_{\Lambda}$.
We next show that  $\mathscr{L}_{\Lambda}=\mathfrak{L}_{\Lambda}$.

Let $\mathbb{F}^+_{\Lambda,k}=\{w\in\mathbb{F}^+_{\Lambda}\colon|w|=k\}$, the set of words in the free semigroup $\mathbb{F}^+_{\Lambda}$ with length $k$.
We denote by $Q_k$ the projection onto $\ell^2(\mathbb{F}^+_{\Lambda,k})$.
For any $j\in\mathbb{Z}$, let $\Phi_j$ be the linear operator on  $B(\mathcal{H}_{\Lambda})$ given by
\begin{align*}
    \Phi_j(T)=\sum_{k\geqslant \max\{ 0,j\}}Q_kTQ_{k-j}.
\end{align*}
Then $\Phi_j$ is a completely contractive projection.
Moreover, for $j\geqslant0$, we have
\begin{equation*}
  \Phi_j(L_{\varphi})=\sum\limits_{w\in \mathbb{F}^+_{\Lambda},\,|w|=j} \varphi(w)L_w\quad\text{for}~L_{\varphi}\in\mathscr{L}_{\Lambda}.
\end{equation*}
The following lemma appears in \cite[Lemma 1.1]{DP99}.

\begin{lemma}\label{lem: Cesaro operators}
The Ces\`aro operators on $B(\mathcal{H}_{\Lambda})$ defined by
\begin{equation*}
  \Sigma_k(T)=\sum_{|j|<k}\left(1-\frac{|j|}{k} \right) \Phi_j(T),\quad k\geqslant 1
\end{equation*}
are completely positive contractions.
Moreover, for any $T\in B(\mathcal{H}_{\Lambda})$, $\Sigma_k(T)$ converges to $T$ in the strong-operator topology.
\end{lemma}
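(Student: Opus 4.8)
The plan is to realize the operators $\Phi_j$, and hence the Cesàro means $\Sigma_k$, as Fourier and Cesàro averages of the gauge circle action on $B(\mathcal{H}_{\Lambda})$, and then to read off both complete positivity and strong convergence from the standard properties of the Fejér kernel. First I would introduce, for each $\lambda\in\mathbb{T}$, the diagonal unitary $U_\lambda$ determined by $U_\lambda\xi_w=\lambda^{|w|}\xi_w$ and the $\ast$-automorphism $\gamma_\lambda(T)=U_\lambda TU_\lambda^\ast$ of $B(\mathcal{H}_{\Lambda})$. A direct computation on matrix entries gives
\[
 \langle\gamma_\lambda(T)\xi_w,\xi_{w'}\rangle=\lambda^{|w'|-|w|}\langle T\xi_w,\xi_{w'}\rangle,
\]
so $\gamma_\lambda$ scales the part of $T$ shifting length by $|w'|-|w|$ accordingly. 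Integrating against normalized Haar measure on $\mathbb{T}$ and using $\int_{\mathbb{T}}\lambda^m\,d\lambda=\delta_{m,0}$, I obtain the identity $\Phi_j(T)=\int_{\mathbb{T}}\lambda^{-j}\gamma_\lambda(T)\,d\lambda$, understood entrywise (equivalently, in the weak operator topology). Substituting into the definition of $\Sigma_k$ and interchanging the finite sum with the integral yields the Fejér representation
\[
 \Sigma_k(T)=\int_{\mathbb{T}}F_k(\lambda)\,\gamma_\lambda(T)\,d\lambda,\qquad F_k(\lambda)=\sum_{|j|<k}\Big(1-\tfrac{|j|}{k}\Big)\lambda^{j},
\]
where $F_k$ is the classical Fejér kernel, satisfying $F_k\geqslant 0$ and $\int_{\mathbb{T}}F_k\,d\lambda=1$ (the symmetry of the coefficients $1-|j|/k$ absorbs the sign of the exponent).

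Complete positivity and contractivity then follow formally. Each $\gamma_\lambda$ is a unital $\ast$-automorphism, hence completely positive with $\gamma_\lambda(I)=I$. For $T\geqslant 0$ and any $\xi$, the scalar identity $\langle\Sigma_k(T)\xi,\xi\rangle=\int_{\mathbb{T}}F_k(\lambda)\langle\gamma_\lambda(T)\xi,\xi\rangle\,d\lambda\geqslant 0$ holds because the integrand is a nonnegative continuous function of $\lambda$; applying the same argument to the amplifications $\Sigma_k\otimes\mathrm{id}_n$, which have the identical integral form with $\gamma_\lambda\otimes\mathrm{id}_n$ in place of $\gamma_\lambda$, shows that $\Sigma_k$ is completely positive. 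Since $\Sigma_k(I)=\int_{\mathbb{T}}F_k(\lambda)I\,d\lambda=I$, the map is unital, whence $\|\Sigma_k\|=\|\Sigma_k(I)\|=1$ and $\Sigma_k$ is a completely contractive, in particular contractive, map. This part is routine once the integral representation is in hand.

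The crux is the strong convergence, which I would prove by the vector-valued Fejér theorem. Fix $T\in B(\mathcal{H}_{\Lambda})$ and $\xi\in\mathcal{H}_{\Lambda}$ and set $f(\lambda)=\gamma_\lambda(T)\xi$. The key point is that $f\colon\mathbb{T}\to\mathcal{H}_{\Lambda}$ is norm-continuous: since $U_\lambda^\ast\xi=\sum_w\overline{\lambda}^{\,|w|}\xi(w)\xi_w$ depends norm-continuously on $\lambda$ by dominated convergence, and $T$ is bounded, one gets $\|\gamma_\lambda(T)\xi-\gamma_{\lambda_0}(T)\xi\|\to 0$ as $\lambda\to\lambda_0$. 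Because $F_k\,d\lambda$ is an approximate identity concentrating at $\lambda=1$ and $f(1)=\gamma_1(T)\xi=T\xi$ (as $U_1=I$), the estimate
\[
 \|\Sigma_k(T)\xi-T\xi\|=\Big\|\int_{\mathbb{T}}F_k(\lambda)\big(f(\lambda)-f(1)\big)\,d\lambda\Big\|\leqslant\int_{\mathbb{T}}F_k(\lambda)\,\|f(\lambda)-f(1)\|\,d\lambda
\]
tends to $0$: split the integral into a small arc around $\lambda=1$, where $\|f(\lambda)-f(1)\|$ is small by continuity, and its complement, where $\int F_k\,d\lambda\to 0$ while $\|f(\lambda)-f(1)\|\leqslant 2\|T\|\,\|\xi\|$ stays bounded. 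Hence $\Sigma_k(T)\to T$ in the strong operator topology.

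I expect the main obstacle to be the careful bookkeeping for the operator-valued integral $\int_{\mathbb{T}}F_k(\lambda)\gamma_\lambda(T)\,d\lambda$: the representation of $\Phi_j$ is only available weakly (entrywise), so I must be careful to extract genuine \emph{norm}-continuity of the orbit map $f(\lambda)=\gamma_\lambda(T)\xi$ — which is what powers the Fejér argument — rather than merely weak continuity, and to confine all positivity arguments to the scalar integrals of continuous functions of $\lambda$ where no measurability subtleties arise.
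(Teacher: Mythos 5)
Your argument is correct; the paper does not prove this lemma itself but simply cites \cite[Lemma 1.1]{DP99}, and the proof given there is exactly your gauge-action/Fej\'er-kernel argument: realize $\Phi_j$ as the $j$-th Fourier coefficient of $\lambda\mapsto U_\lambda TU_\lambda^*$, so that $\Sigma_k$ is integration against the Fej\'er kernel, giving unital complete positivity and strong convergence via the vector-valued Fej\'er theorem. Your attention to the norm-continuity of the orbit map $\lambda\mapsto\gamma_\lambda(T)\xi$ is the right place to be careful, and the argument goes through as you describe.
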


It is clear that $\Sigma_k(L_\varphi)$ is a finite sum for $L_{\varphi}\in\mathscr{L}_{\Lambda}$ when $|\Lambda|<\infty$.
But this is not true for the case $|\Lambda|=\infty$.
We need to overcome this difficulty.

Suppose $\mathcal{I}$ is a finite subset of $\Lambda$.
Let $\mathbb{F}_{\mathcal{I}}^+$ be the free semigroup generated by $\{z_{\alpha}\}_{\alpha\in \mathcal{I}}$.
We define a linear map on $\mathscr{L}_{\Lambda}$ as follows:
\begin{equation*}
  \mathcal{E}_{\mathcal{I}}(L_{\varphi})
  =\sum_{w\in\mathbb{F}_{\mathcal{I}}^+}\varphi(w)L_w,\quad L_{\varphi}\in \mathscr{L}_{\Lambda}.
\end{equation*}
Due to the following lemma, $\mathcal{E}_{\mathcal{I}}$ is called the {\em conditional expectation} associated with $\mathcal{I}$.

\begin{lemma}\label{lem:conditional expectation}
The map $\mathcal{E}_{\mathcal{I}}$ is a completely contractive homomorphism.
Moreover, $\mathcal{E}_{\mathcal{I}}(L_{\varphi})$ converges to $L_{\varphi}$ in the strong-operator topology.
\end{lemma}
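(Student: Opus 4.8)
The plan is to exhibit $\mathcal{E}_{\mathcal{I}}$ as an \emph{ampliation of a compression}; this single structural observation yields the norm bound (hence boundedness of the defining series) and makes both complete contractivity and multiplicativity transparent, after which the strong-operator convergence is routine.

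First I would record the combinatorial fact that drives everything. Put $\mathcal{J}=\Lambda\setminus\mathcal{I}$ and let $V_{\mathcal{I}}$ be the set of words that are empty or begin with a letter of $\mathcal{J}$. Then every $w\in\mathbb{F}^+_{\Lambda}$ has a unique factorization $w=uv$ with $u\in\mathbb{F}^+_{\mathcal{I}}$ its maximal initial segment lying in $\mathbb{F}^+_{\mathcal{I}}$ and $v\in V_{\mathcal{I}}$. The resulting bijection $\mathbb{F}^+_{\mathcal{I}}\times V_{\mathcal{I}}\to\mathbb{F}^+_{\Lambda}$ induces a unitary $U\colon\mathcal{H}_{\Lambda}\to\ell^2(\mathbb{F}^+_{\mathcal{I}})\otimes\ell^2(V_{\mathcal{I}})$ with $U\xi_{uv}=\xi_u\otimes\xi_v$. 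For $w\in\mathbb{F}^+_{\mathcal{I}}$ the word $wuv$ has $\mathcal{I}$-factorization $(wu)\cdot v$, so $UL_wU^*=L_w^{(\mathcal{I})}\otimes I$, where $L_w^{(\mathcal{I})}$ is the left regular operator of $w$ on $\ell^2(\mathbb{F}^+_{\mathcal{I}})$; summing against $\varphi$ gives $U\mathcal{E}_{\mathcal{I}}(L_{\varphi})U^*=A_{\mathcal{I}}\otimes I$, where $A_{\mathcal{I}}$ is the left convolution operator on $\ell^2(\mathbb{F}^+_{\mathcal{I}})$ with symbol $\varphi_{\mathcal{I}}:=\varphi\,\mathbbm{1}_{\mathbb{F}^+_{\mathcal{I}}}$.

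Next I would identify $A_{\mathcal{I}}$ with a compression of $L_{\varphi}$, which is what actually makes it bounded. Letting $P_{\mathcal{I}}$ be the projection of $\mathcal{H}_{\Lambda}$ onto $\ell^2(\mathbb{F}^+_{\mathcal{I}})$, the identity $L_{\varphi}\xi_u=\sum_{v}\varphi(v)\xi_{vu}$ together with the remark that $vu\in\mathbb{F}^+_{\mathcal{I}}$ iff $v\in\mathbb{F}^+_{\mathcal{I}}$ (for $u\in\mathbb{F}^+_{\mathcal{I}}$) shows $A_{\mathcal{I}}=P_{\mathcal{I}}L_{\varphi}P_{\mathcal{I}}\big|_{\ell^2(\mathbb{F}^+_{\mathcal{I}})}$. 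Thus $\mathcal{E}_{\mathcal{I}}$ is the composite of the compression $L_{\varphi}\mapsto P_{\mathcal{I}}L_{\varphi}P_{\mathcal{I}}\big|_{\ell^2(\mathbb{F}^+_{\mathcal{I}})}$ with the ampliation $A\mapsto U^*(A\otimes I)U$. Since compression by a projection is completely contractive and ampliation is completely isometric, $\mathcal{E}_{\mathcal{I}}$ is completely contractive and $\|\mathcal{E}_{\mathcal{I}}(L_{\varphi})\|\leqslant\|L_{\varphi}\|$; in particular the defining series is bounded. For multiplicativity I would argue directly: since every prefix of a word in $\mathbb{F}^+_{\mathcal{I}}$ and its complementary suffix again lie in $\mathbb{F}^+_{\mathcal{I}}$, one gets $(\varphi*\psi)\,\mathbbm{1}_{\mathbb{F}^+_{\mathcal{I}}}=\varphi_{\mathcal{I}}*\psi_{\mathcal{I}}$ (with $\psi_{\mathcal{I}}$ defined analogously), which is exactly $\mathcal{E}_{\mathcal{I}}(L_{\varphi}L_{\psi})=\mathcal{E}_{\mathcal{I}}(L_{\varphi})\mathcal{E}_{\mathcal{I}}(L_{\psi})$.

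For the convergence, with $\mathcal{I}$ ranging over the finite subsets of $\Lambda$ directed by inclusion, I would use the uniform bound $\|\mathcal{E}_{\mathcal{I}}(L_{\varphi})\|\leqslant\|L_{\varphi}\|$ together with the computation $\|(L_{\varphi}-\mathcal{E}_{\mathcal{I}}(L_{\varphi}))\xi_u\|^2=\sum_{v\notin\mathbb{F}^+_{\mathcal{I}}}|\varphi(v)|^2=\|\varphi-\varphi_{\mathcal{I}}\|_2^2$, valid for every basis vector $\xi_u$ and independent of $u$; this tends to $0$ because $\varphi\in\ell^2(\mathbb{F}^+_{\Lambda})$ and each word involves only finitely many generators. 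A standard $\varepsilon/3$ argument then promotes convergence on the total set $\{\xi_u\}$ to strong-operator convergence on all of $\mathcal{H}_{\Lambda}$. I expect the genuine obstacle to be the first part: the series $\sum_{w\in\mathbb{F}^+_{\mathcal{I}}}\varphi(w)L_w$ is a priori only formal, and establishing that it defines a bounded operator of norm at most $\|L_{\varphi}\|$ is precisely where the factorization $\mathcal{E}_{\mathcal{I}}=(\text{ampliation})\circ(\text{compression})$ is indispensable; correctly constructing $U$ and verifying $UL_wU^*=L_w^{(\mathcal{I})}\otimes I$ is the crux.
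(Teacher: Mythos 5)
Your proof is correct and follows essentially the same route as the paper: your factorization $w=uv$ with $u\in\mathbb{F}^+_{\mathcal{I}}$ maximal and $v\in V_{\mathcal{I}}$ is exactly the paper's decomposition $\mathbb{F}^+_{\Lambda}=\bigsqcup_{v\in X(\mathcal{I})}\mathbb{F}^+_{\mathcal{I}}v$, your ampliation-of-a-compression $U^*(P_{\mathcal{I}}L_{\varphi}P_{\mathcal{I}}\otimes I)U$ is the paper's $\bigoplus_{v} R_vQ_{\mathcal{I}}L_{\varphi}Q_{\mathcal{I}}R_v^*$, and the strong-operator convergence is obtained in both cases from convergence on basis vectors plus uniform boundedness of the net. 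The only difference is cosmetic: you spell out the multiplicativity via the convolution identity and the $\ell^2$-tail estimate for convergence, which the paper leaves as evident.
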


\begin{proof}
It is clear that $\mathcal{E}_{\mathcal{I}}$ is a homomorphism.
Let $X(\mathcal{I})=\{w\in\mathbb{F}_{\Lambda}^+\colon z_{\alpha}\nmid w,\forall \alpha\in\mathcal{I}\}$.
Then
\begin{equation*}
  \mathbb{F}_{\Lambda}^+=\bigsqcup_{w\in X(\mathcal{I})}\mathbb{F}_{\mathcal{I}}^+w,\quad
  \mathcal{H}_{\Lambda}=\bigoplus_{w\in X(\mathcal{I})}\ell^2(\mathbb{F}_{\mathcal{I}}^+w),
\end{equation*}
and
\begin{equation*}
  \mathcal{E}_{\mathcal{I}}(L_{\varphi})=\bigoplus_{w\in X(\mathcal{I})} R_wQ_{\mathcal{I}}L_{\varphi}Q_{\mathcal{I}}R_w^*,
\end{equation*}
where $Q_{\mathcal{I}}$ is the projection onto $\ell^2(\mathbb{F}_{\mathcal{I}}^+)$.
It follows that $\mathcal{E}_{\mathcal{I}}$ is completely contractive.
For any $w\in\mathbb{F}_{\Lambda}^+$, it is clear that
\begin{equation*}
  \lim_{\mathcal{I}}\|(\mathcal{E}_{\mathcal{I}}(L_{\varphi})
  -L_{\varphi})\xi_w\|_2=0.
\end{equation*}
Since the net $\{\mathcal{E}_{\mathcal{I}}(L_{\varphi})\}_{\mathcal{I}}$ is uniformly bounded, it converges to $L_{\varphi}$ in the strong-operator topology.
\end{proof}

\begin{proposition}\label{prop: two algebras same}
The algebra $\mathscr{L}_{\Lambda}$ is the weak-operator closure of $span\{L_w\colon w\in\mathbb{F}_{\Lambda}^+\}$.
That is $\mathscr{L}_{\Lambda}=\mathfrak{L}_{\Lambda}$.
\end{proposition}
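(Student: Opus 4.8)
The plan is to prove the two inclusions $\mathfrak{L}_{\Lambda}\subseteq\mathscr{L}_{\Lambda}$ and $\mathscr{L}_{\Lambda}\subseteq\mathfrak{L}_{\Lambda}$ separately. The first is immediate from Proposition \ref{prop:commutant}: since $\mathscr{L}_{\Lambda}=\mathscr{R}_{\Lambda}'$ is a commutant, it is automatically weak-operator closed, and it contains every generator $L_{\alpha}=L_{\xi_{z_{\alpha}}}$; being an algebra it then contains $\operatorname{span}\{L_w\colon w\in\mathbb{F}_{\Lambda}^+\}$, and passing to weak-operator closures yields $\mathfrak{L}_{\Lambda}\subseteq\mathscr{L}_{\Lambda}$. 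The entire content of the proposition therefore lies in the reverse inclusion: every bounded left convolution operator $L_{\varphi}$ must be approximated in the weak-operator topology by elements of $\operatorname{span}\{L_w\}$.

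For the reverse inclusion I would first dispose of the case $|\Lambda|<\infty$ as a warm-up, where the Ces\`aro operators of Lemma \ref{lem: Cesaro operators} do all the work directly. For $L_{\varphi}\in\mathscr{L}_{\Lambda}$ and each $j\geqslant0$ one has $\Phi_j(L_{\varphi})=\sum_{|w|=j}\varphi(w)L_w$, which is a \emph{finite} sum precisely because there are only finitely many words of each length when $\Lambda$ is finite. Hence $\Sigma_k(L_{\varphi})$ is a finite linear combination of the $L_w$, so it lies in $\operatorname{span}\{L_w\}\subseteq\mathfrak{L}_{\Lambda}$; since $\Sigma_k(L_{\varphi})\to L_{\varphi}$ in the strong-operator topology, and hence in the weak-operator topology, we conclude $L_{\varphi}\in\mathfrak{L}_{\Lambda}$.

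The main obstacle, and the place where the real argument lives, is the case $|\Lambda|=\infty$: then $\Phi_j(L_{\varphi})$ is an infinite sum over the words of length $j$, so $\Sigma_k(L_{\varphi})$ need not belong to $\operatorname{span}\{L_w\}$ and the finite-case reasoning collapses. This is exactly the difficulty the conditional expectations $\mathcal{E}_{\mathcal{I}}$ of Lemma \ref{lem:conditional expectation} are designed to remove, and the plan is to approximate in two successive stages. Fix a finite subset $\mathcal{I}\subseteq\Lambda$ and set $L_{\psi}:=\mathcal{E}_{\mathcal{I}}(L_{\varphi})=\sum_{w\in\mathbb{F}_{\mathcal{I}}^+}\varphi(w)L_w$, which still lies in $\mathscr{L}_{\Lambda}$ since $\mathcal{E}_{\mathcal{I}}$ is a homomorphism into $\mathscr{L}_{\Lambda}$. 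Now $\psi$ is supported on words built from the finitely many generators indexed by $\mathcal{I}$, so $\Phi_j(L_{\psi})=\sum_{|w|=j,\,w\in\mathbb{F}_{\mathcal{I}}^+}\varphi(w)L_w$ is a finite sum and the finite-generator argument applies verbatim: $\Sigma_k(L_{\psi})$ is a finite combination of the $L_w$ converging strongly to $L_{\psi}$, whence $\mathcal{E}_{\mathcal{I}}(L_{\varphi})=L_{\psi}\in\mathfrak{L}_{\Lambda}$.

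To finish, by Lemma \ref{lem:conditional expectation} the net $\{\mathcal{E}_{\mathcal{I}}(L_{\varphi})\}_{\mathcal{I}}$ converges to $L_{\varphi}$ in the strong-operator topology as $\mathcal{I}$ ranges over the finite subsets of $\Lambda$; since each term lies in $\mathfrak{L}_{\Lambda}$ and $\mathfrak{L}_{\Lambda}$ is weak-operator closed, this forces $L_{\varphi}\in\mathfrak{L}_{\Lambda}$. The only bookkeeping to watch carefully is that each approximation step stays inside $\mathscr{L}_{\Lambda}$, so that the Ces\`aro formula $\Phi_j(L_{\varphi})=\sum_{|w|=j}\varphi(w)L_w$ for left convolution operators may legitimately be reused on $L_{\psi}$; once this is checked, the two nested strong-operator limits close the argument.
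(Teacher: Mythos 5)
Your proof is correct and uses essentially the same approach as the paper: the easy inclusion $\mathfrak{L}_{\Lambda}\subseteq\mathscr{L}_{\Lambda}$ via Proposition \ref{prop:commutant}, and the reverse inclusion via the same two approximation devices, the Ces\`aro operators of \Cref{lem: Cesaro operators} and the conditional expectations of \Cref{lem:conditional expectation}. The only (harmless) difference is the order of composition: the paper approximates $L_{\varphi}$ by $\mathcal{E}_{\mathcal{I}}(\Sigma_k(L_{\varphi}))$ (Ces\`aro first, then expectation), whereas you use $\Sigma_k(\mathcal{E}_{\mathcal{I}}(L_{\varphi}))$; either order produces a finite sum in $\operatorname{span}\{L_w\colon w\in\mathbb{F}_{\Lambda}^+\}$ converging to $L_{\varphi}$ in the strong-operator topology.
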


\begin{proof}
Let $L_{\varphi}\in\mathfrak{L}_{\Lambda}$.
By \Cref{lem: Cesaro operators},
\begin{equation*}
  \Sigma_k(L_{\varphi})=\sum_{0\leqslant j<k}\left(1-\frac{j}{k} \right)\sum_{|w|=j}\varphi(w)L_w
\end{equation*}
converges to $L_{\varphi}$ in the strong-operator topology.
By \Cref{lem:conditional expectation}, $\mathcal{E}_{\mathcal{I}}( \Sigma_k(L_{\varphi}))$ converges to $\Sigma_k(L_{\varphi})$ in the strong-operator topology for every $k\geqslant 1$.
Note that $\mathcal{E}_{\mathcal{I}}(\Sigma_k(L_{\varphi}))$ is a finite sum since $\mathcal{I}$ is a finite set, i.e.,
\begin{equation*}
  \mathcal{E}_{\mathcal{I}}(\Sigma_k(L_{\varphi}))\in span\{L_w\colon w\in\mathbb{F}_{\Lambda}^+\}.
\end{equation*}
This completes the proof.
\end{proof}

\begin{remark}
It was shown that $\mathscr{L}_{\Lambda}=\mathfrak{L}_{\Lambda}$ when $|\Lambda|=n$ (see \cite[Theorem 1.2]{DP99}).
\end{remark}

In the end of this section, we show that $\mathfrak{L}_{\Lambda}$ is invariant under conjugation.

\begin{lemma}\label{lem:conjugation}
Let $w\in\mathbb{F}^+_{\Lambda}$.
Then $L_w^*\mathfrak{L}_{\Lambda}L_w\subseteq\mathfrak{L}_{\Lambda}$.
\end{lemma}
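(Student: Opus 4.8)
The plan is to exploit the identifications $\mathfrak{L}_{\Lambda}=\mathscr{L}_{\Lambda}=\mathscr{R}_{\Lambda}'$ coming from \Cref{prop:commutant} and \Cref{prop: two algebras same}. Fix $L_{\varphi}\in\mathfrak{L}_{\Lambda}$ and set $T:=L_w^{*}L_{\varphi}L_w$. To prove $T\in\mathfrak{L}_{\Lambda}$ it suffices to show $T\in\mathscr{R}_{\Lambda}'$, and since (by the right-handed analogue of \Cref{prop: two algebras same}) the operators $R_u=R_{\xi_u}$, $u\in\mathbb{F}^{+}_{\Lambda}$, generate $\mathscr{R}_{\Lambda}$ as a weak-operator closed algebra, it is enough to verify that $T$ commutes with every $R_u$. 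Because $L_{\varphi},L_w\in\mathscr{L}_{\Lambda}$ commute with $R_u$, I can slide $R_u$ through these two factors and reduce the whole question to a single commutator:
\[
  T R_u - R_u T = \big(L_w^{*}R_u - R_u L_w^{*}\big)\,L_{\varphi}L_w .
\]

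Next I would compute the commutator $[L_w^{*},R_u]$ on basis vectors $\xi_v$, using $R_u\xi_v=\xi_{vu}$. If $w\mid v$, write $v=wv'$; then both $L_w^{*}R_u\xi_v$ and $R_u L_w^{*}\xi_v$ equal $\xi_{v'u}$, so the commutator vanishes. If $w\nmid v$, then $L_w^{*}\xi_v=0$, while $L_w^{*}R_u\xi_v=L_w^{*}\xi_{vu}$ is nonzero only when $w\mid vu$; since $vu$ begins with $v$, this forces $v$ to be a proper prefix of $w$, in particular $|v|<|w|$. Hence $[L_w^{*},R_u]\xi_v=0$ whenever $|v|\geqslant|w|$, which says exactly that $[L_w^{*},R_u]=[L_w^{*},R_u]\,P$, where $P=\sum_{k<|w|}Q_k$ is the projection onto $\ell^{2}(\{x\in\mathbb{F}^{+}_{\Lambda}\colon |x|<|w|\})$.

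Finally I would observe that $L_{\varphi}L_w$ pushes everything into long words. Indeed $L_w$ has range $\ell^{2}(w\mathbb{F}^{+}_{\Lambda})\subseteq\ell^{2}(\{x\colon |x|\geqslant|w|\})$, and every left convolution operator is length-nondecreasing: each $L_a$ sends $\xi_v$ to $\xi_{av}$ with $|av|\geqslant|v|$, and this invariance of the subspaces $\ell^{2}(\{x\colon|x|\geqslant m\})$ survives passage to weak-operator limits. Thus $L_{\varphi}$ preserves $\ell^{2}(\{x\colon|x|\geqslant|w|\})$, so $P L_{\varphi}L_w=0$, and therefore
\[
  \big(L_w^{*}R_u - R_u L_w^{*}\big)L_{\varphi}L_w
  = [L_w^{*},R_u]\,P\,L_{\varphi}L_w = 0 .
\]
This gives $T R_u=R_u T$ for all $u$, whence $T\in\mathscr{R}_{\Lambda}'=\mathscr{L}_{\Lambda}=\mathfrak{L}_{\Lambda}$, as required.

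The main obstacle is precisely the non-self-adjointness of the algebra: $L_w^{*}\notin\mathfrak{L}_{\Lambda}$ and it does not commute with the right convolution operators, so the naive argument "each factor commutes with $R_u$, hence so does the product" breaks down. The real content of the proof is that the defect $[L_w^{*},R_u]$ is entirely concentrated on the short words $\{|v|<|w|\}$ near the empty word, whereas the analytic factor $L_{\varphi}L_w$ lands in the long words $\{|x|\geqslant|w|\}$; the two supports are disjoint, so the defect is annihilated. (One could instead argue by hand, expanding $L_{\varphi}$ along $\{L_u\}$ and checking that each $L_w^{*}L_u L_w$ is again of the form $L_x$ or $0$, but that route requires separately justifying convergence of the resulting series, which the commutant argument sidesteps.)
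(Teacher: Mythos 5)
Your proof is correct, but it takes a genuinely different route from the paper. The paper argues directly at the level of symbols: it first shows, via a matrix-coefficient computation, that each conjugate $L_w^*L_uL_w$ is either $0$ or equal to $L_v$ where $uw=wv$, and then re-sums to get the explicit formula $L_w^*L_{\varphi}L_w=\sum_{v}\varphi(wvw^{-1})L_v$ — precisely the ``by hand'' route you mention at the end, with the convergence of the re-summed series left somewhat implicit. You instead reduce everything to the commutant characterization $\mathfrak{L}_{\Lambda}=\mathscr{L}_{\Lambda}=\mathscr{R}_{\Lambda}'$ of \Cref{prop:commutant} and \Cref{prop: two algebras same}, and the real content of your argument is the localization $[L_w^*,R_u]=[L_w^*,R_u]P$ together with the fact that $L_{\varphi}L_w$ maps into $\ell^2(\{x\colon|x|\geqslant|w|\})$, so the defect of commutation is annihilated; all the steps check out (in fact, commuting with the generators $R_u$ alone already forces $T=L_{T\xi_e}$, as one sees by inspecting the proof of \Cref{prop:commutant}, so you do not even need the weak-operator generation statement for $\mathscr{R}_{\Lambda}$). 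There is no circularity, since both propositions you invoke precede \Cref{lem:conjugation} in the paper and do not use it. What your approach buys is a clean, convergence-free argument; what it gives up is the explicit description of the conjugated operator, namely that its symbol is $v\mapsto\varphi(wvw^{-1})$, which is the form in which the lemma is most readily visualized.
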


\begin{proof}
For any
\begin{equation*}
  L_{\varphi}=\sum\limits_{u\in\mathbb{F}^+_{\Lambda}}\varphi(u)L_u
  \in\mathfrak{L}_{\Lambda},
\end{equation*}
we need to prove that $L_w^*L_{\varphi}L_w\in\mathfrak{L}_{\Lambda}$.
We first prove that for every $u\in\mathbb{F}^+_{\Lambda}$, $L_w^*L_uL_w \in \mathfrak{L}_{\Lambda}$.
Without loss of generality, we assume that $L_w^*L_uL_w\ne 0$.
Then there are words $w_1, w_2\in \mathbb{F}^+_{\Lambda}$ such that
\begin{equation*}
  \langle L_w^*L_uL_w\xi_{w_1},\xi_{w_2}\rangle\ne 0,
\end{equation*}
which imples that $uww_1=ww_2$.
It follows from $|uw|\geqslant |w|$ that $uw=wv$ for some $v\in \mathbb{F}^+_{\Lambda}$.
Hence $L_w^*L_uL_w=L_w^*L_wL_v=L_v \in \mathfrak{L}_{\Lambda}$.
Thus, we have
\begin{align*}
  L_w^*L_{\varphi}L_w
  &=L_w^*\Big(\sum\limits_{u\in\mathbb{F}^+_{\Lambda}} \varphi(u)L_u\Big)L_w\\
  &=\sum\limits_{u,v\in \mathbb{F}^+_{\Lambda},\,uw=wv}\varphi(u)L_v
  =\sum\limits_{v\in\mathbb{F}^+_{\Lambda}} \varphi(wvw^{-1})L_v,
\end{align*}
where $\varphi(wvw^{-1})=0$ if $wvw^{-1} \notin \mathbb{F}^+_{\Lambda}$.
Hence $L_w^*L_{\varphi}L_w\in \mathfrak{L}_{\Lambda}$.
\end{proof}

\section{Automatic continuity of derivations}\label{sec:continuity derivation}
Let $\mathcal{B}$ be a unital Banach algebra over $\mathbb{C}$, and let $\mathcal{X}$ be a Banach $\mathcal{B}$-bimodule.
A {\em derivation} of $\mathcal{B}$ into $\mathcal{X}$ is a linear map $D$ satisfying the Leibniz rule: $D(AB)=AD(B)+D(A)B$ for all $A$,
$B\in\mathcal{B}$.
When considering the bimodule $\mathcal{X}$ as $\mathcal{B}$ itself, we simply call $D$ a derivation of $\mathcal{B}$.
We say $D$ is {\em continuous} if $\|D\|=\sup_{A\in\mathcal{B},\,\|A\|=1}\|D(A)\|<\infty$.
We will show that all derivations of $\mathfrak{L}_{\Lambda}$ and $\mathfrak{A}_{\Lambda}$ are continuous.

Let us  recall some significant definitions in set theory.
A {\em well-order} on a set $\mathscr{S}$ is a total ordering with the property that every non-empty subset of $\mathscr{S}$ has a least element in this ordering.
The set $\mathscr{S}$ together with the ordering is called a {\em well-ordered set}.

\begin{lemma}[The well-ordering theorem]
Every set can be well-ordered.
\end{lemma}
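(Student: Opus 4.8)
The plan is to derive the well-ordering theorem from Zorn's Lemma (equivalently, from the Axiom of Choice). First I would consider the collection $\mathcal{W}$ of all pairs $(A,\preceq)$, where $A\subseteq\mathscr{S}$ and $\preceq$ is a well-ordering on $A$; this collection is nonempty since $(\emptyset,\emptyset)\in\mathcal{W}$. I would partially order $\mathcal{W}$ by declaring $(A,\preceq_A)\trianglelefteq(B,\preceq_B)$ to hold precisely when $A\subseteq B$, the restriction of $\preceq_B$ to $A$ coincides with $\preceq_A$, and $A$ is an initial segment of $(B,\preceq_B)$, meaning that whenever $a\in A$, $b\in B$ and $b\preceq_B a$, then $b\in A$. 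That $\trianglelefteq$ is reflexive, antisymmetric, and transitive is routine.

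The crucial step is to verify that every chain in $(\mathcal{W},\trianglelefteq)$ admits an upper bound. Given a chain $\mathcal{C}=\{(A_i,\preceq_i)\}_{i\in I}$, I would form $A=\bigcup_{i\in I}A_i$ together with the relation $\preceq=\bigcup_{i\in I}\preceq_i$. Totality of $\preceq$ holds because any two elements lie in a common $A_i$, while antisymmetry and transitivity are inherited from the compatibility of the orderings $\preceq_i$ along the chain. The delicate point is well-foundedness: given a nonempty $T\subseteq A$, I would choose $i$ with $T\cap A_i\neq\emptyset$ and let $m$ be the $\preceq_i$-least element of $T\cap A_i$. Using that each $A_j$ is an initial segment of every larger member of the chain, one checks that no element of $T$ lies strictly below $m$, so $m$ is the $\preceq$-least element of $T$. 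Hence $(A,\preceq)\in\mathcal{W}$ and it dominates every member of $\mathcal{C}$.

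Applying Zorn's Lemma then yields a maximal element $(M,\preceq_M)$ of $\mathcal{W}$. Finally I would argue that $M=\mathscr{S}$: if some $x\in\mathscr{S}\setminus M$ existed, I would extend $\preceq_M$ to a relation $\preceq^{+}$ on $M\cup\{x\}$ by placing $x$ strictly above every element of $M$. This $\preceq^{+}$ is again a well-ordering, since any nonempty subset either meets $M$, where it has a $\preceq_M$-least element that remains least in $\preceq^{+}$, or else equals $\{x\}$; and $(M,\preceq_M)\trianglelefteq(M\cup\{x\},\preceq^{+})$ is a strict extension, contradicting maximality. Therefore $\preceq_M$ is a well-ordering of all of $\mathscr{S}$.

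I expect the main obstacle to be the verification of well-foundedness for the union over a chain, since this is exactly where the initial-segment clause in the definition of $\trianglelefteq$ is indispensable: without it the union of the component orderings need not be well-founded, even though it remains a total order. An alternative route bypassing Zorn's Lemma would fix a choice function $f$ on the nonempty subsets of $\mathscr{S}$ and define $a_{\xi}=f\bigl(\mathscr{S}\setminus\{a_{\eta}:\eta<\xi\}\bigr)$ by transfinite recursion on the ordinals; there the subtlety shifts to showing that the recursion exhausts $\mathscr{S}$, which requires a Hartogs-number argument to bound the ordinals that can arise.
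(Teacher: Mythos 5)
Your proposal is a correct and complete rendition of the standard Zorn's Lemma proof of Zermelo's well-ordering theorem; in particular you correctly identify the initial-segment clause as the ingredient that makes the union over a chain well-founded, which is exactly the point where naive versions of this argument break down. Note, however, that the paper offers no proof of this lemma at all: it is quoted as a classical fact of set theory (equivalent to the Axiom of Choice) and used only to fix a well-ordering $\preccurlyeq$ on the index set $\Lambda$, so there is no "paper's approach" to compare against --- your argument (or the transfinite-recursion alternative you sketch) is precisely what a reader would supply from a set-theory text.
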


We assume that $(\Lambda,\preccurlyeq)$ is a well-ordered set by the well-ordering theorem.
For any $\alpha,\beta\in\Lambda$, we write $\alpha\prec\beta$ if $\alpha\preccurlyeq\beta$ and $\alpha\ne\beta$.
In the following, we introduce an order relation on the free semigroup $\mathbb{F}^+_{\Lambda}$ inherited from $(\Lambda,\preccurlyeq)$.

\begin{definition}\label{def:order}
Suppose that $u=z_{\alpha_1}z_{\alpha_2}\cdots z_{\alpha_m}, v=z_{\beta_1}z_{\beta_2}\cdots z_{\beta_n}\in\mathbb{F}^+_{\Lambda}$, where $\alpha_i,\beta_j\in\Lambda$, $1\leqslant i\leqslant m$, $1\leqslant j\leqslant n$.
We say that $u\prec v$ if either of the following two conditions holds:
\begin{itemize}
\item[(1)] $m<n$;

\item[(2)] $m=n$, and there exists $1\leqslant i\leqslant n$ such that $\alpha_1=\beta_1,\cdots,\alpha_{i-1}=\beta_{i-1}$ and $\alpha_i\prec \beta_i$.
\end{itemize}
We use $u\preccurlyeq v$ to denote $u\prec v$ or $u=v$.
\end{definition}

It is apparent that $(\mathbb{F}^+_{\Lambda},\preccurlyeq)$ is a totally ordered set.
The order is multiplication-invariant in the following sense:
For any $w,u,v\in\mathbb{F}^+_{\Lambda}$, if $u\preccurlyeq v$, then
\begin{equation*}\label{eq:relation}
  wu\preccurlyeq wv,\quad uw \preccurlyeq vw,
\end{equation*}
and the equality holds if and only if $u=v$.
Moreover, $(\mathbb{F}^+_{\Lambda},\preccurlyeq)$ is well-ordered.

\begin{lemma}\label{lem:well-ordered set}
$(\mathbb{F}^+_{\Lambda}, \preccurlyeq)$ is a well-ordered set.
\end{lemma}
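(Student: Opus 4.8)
The plan is to take an arbitrary non-empty subset $S\subseteq\mathbb{F}^+_{\Lambda}$ and exhibit its least element, exploiting the fact that the order of \Cref{def:order} sorts first by length and only then lexicographically; since total ordering has already been observed, it remains only to produce a least element. First I would reduce to words of a single length. The set of lengths $\{|w|\colon w\in S\}$ is a non-empty subset of $\mathbb{Z}_{\geqslant 0}$, which is well-ordered, so it has a least element $n_0$. By condition (1) of \Cref{def:order}, every word in $S$ of length greater than $n_0$ is strictly larger than any word of length $n_0$, so the least element of $S$ must lie in $S_0=\{w\in S\colon |w|=n_0\}$. It therefore suffices to find a least element in a non-empty set of words all having one and the same length $n_0$.

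For this I would argue by induction on the common length $n_0$. When $n_0=0$ the only word is the empty word $e$, so $S_0=\{e\}$ and there is nothing to prove. For the inductive step, write each $w\in S_0$ as $w=z_{\alpha}w'$ with $\alpha\in\Lambda$ and $|w'|=n_0-1$, and let $A\subseteq\Lambda$ be the non-empty set of all leading indices $\alpha$ occurring this way. Since $(\Lambda,\preccurlyeq)$ is well-ordered, $A$ has a least element $\gamma$. Restricting to those words of $S_0$ that begin with $z_{\gamma}$ and deleting that first letter produces a non-empty set of words of length $n_0-1$, which by the induction hypothesis has a least element $w'_0$; I claim $w_0:=z_{\gamma}w'_0$ is the least element of $S_0$.

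To verify the claim I would compare $w_0$ with an arbitrary $w=z_{\alpha}w'\in S_0$. If $\alpha\neq\gamma$, then $\gamma\prec\alpha$ by minimality of $\gamma$, so condition (2) with $i=1$ gives $w_0\prec w$. If $\alpha=\gamma$, then $w'_0\preccurlyeq w'$ by the choice of $w'_0$, and the left multiplication-invariance of the order recorded immediately after \Cref{def:order} yields $w_0=z_{\gamma}w'_0\preccurlyeq z_{\gamma}w'=w$. In either case $w_0\preccurlyeq w$, so $w_0$ is the least element of $S_0$, and hence of $S$.

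The argument is otherwise routine, and the only point that genuinely requires care is the initial reduction to a fixed length. The naive lexicographic order on the whole semigroup (ignoring length) need not be well-ordered: whenever $\alpha\prec\beta$ in $\Lambda$ one would have an infinite strictly descending chain $z_{\beta}\succ z_{\alpha}z_{\beta}\succ z_{\alpha}^2z_{\beta}\succ\cdots$. It is precisely clause (1) that forestalls this, collapsing every comparison to one among words of a single finite length, i.e.\ to the lexicographic order on a finite power of the well-ordered set $\Lambda$, which is well-ordered. So the essential thing to get right is to invoke clause (1) to descend to a fixed length \emph{before} ever appealing to the lexicographic comparison in clause (2).
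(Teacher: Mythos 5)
Your proof is correct and follows essentially the same route as the paper's: reduce to the subset of minimal length using clause (1), then build the least element letter by letter from the front using the well-ordering of $\Lambda$ (the paper phrases this as an explicit iterative construction of $\alpha_1,\dots,\alpha_n$, you as an induction on the common length, but these are the same argument). Your explicit verification that $z_{\gamma}w_0'$ is indeed least, via the multiplication-invariance noted after \Cref{def:order}, is a detail the paper leaves implicit.
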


\begin{proof}
We assume that $\mathscr{S}$ is a non-empty subset of $\mathbb{F}^+_{\Lambda}$.
Let
\begin{equation*}
  n=\min\{|w|\colon w \in \mathscr{S}\},\quad \mathscr{S}_{0}=\{w\in \mathscr{S}\colon |w|=n\}.
\end{equation*}
Now let
\begin{equation*}
  \alpha_{1}=\min\{\alpha\in\Lambda\colon\text{there exists}~w\in\mathscr{S}_{0}~\text{such that}~z_{\alpha}|w\},
  \quad\mathscr{S}_{1}=\{w\in \mathscr{S}_{0}\colon z_{\alpha_1}|w\}.
\end{equation*}
Then we define
\begin{equation*}
  \alpha_{2}=\min\{\alpha\in\Lambda\colon\text{there exists}~ w \in \mathscr{S}_{1} ~\text{such that}~z_{\alpha_1}z_{\alpha} |w\},
  \quad \mathscr{S}_{2}=\{w\in\mathscr{S}_{1}\colon z_{\alpha_1}z_{\alpha_2}|w\}.
\end{equation*}
We proceed it by induction: for $2\leqslant k\leqslant n$,
\begin{align*}
  \alpha_{k}&=\min\{\alpha\in\Lambda\colon\text{there exists}~ w \in \mathscr{S}_{k-1} ~\text{such that}~z_{\alpha_1}\cdots z_{\alpha_{k-1}}z_{\alpha}|w\},\\
  \mathscr{S}_{k}&=\{w\in\mathscr{S}_{k-1}\colon z_{\alpha_1}\cdots z_{\alpha_{k-1}}z_{\alpha_k}|w\}.
\end{align*}
Then
\begin{equation*}
  \mathscr{S}_{n}=\{z_{\alpha_1}z_{\alpha_2}\cdots z_{\alpha_n}\}
\end{equation*}
consists of a single element since $\Lambda$ is a well-ordered set.
Thus $z_{\alpha_1}z_{\alpha_2} \cdots z_{\alpha_n}$ is the minimal element of $\mathscr{S}$.
\end{proof}

\begin{proposition}\label{prop:semisimple}
The algebras $\mathfrak{L}_{\Lambda}$ and $\mathfrak{A}_{\Lambda}$ are semisimple.
\end{proposition}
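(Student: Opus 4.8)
The plan is to show that both algebras have trivial Jacobson radical by exhibiting, for every nonzero element, a strictly positive lower bound on its spectral radius; this rules out quasinilpotency. Since the Jacobson radical of a unital Banach algebra consists of quasinilpotent elements (take the identity as one of the multipliers), semisimplicity follows at once. As $\mathfrak{A}_{\Lambda}\subseteq\mathfrak{L}_{\Lambda}=\mathscr{L}_{\Lambda}$ and the spectral radius $r(T)=\lim_{n}\|T^n\|^{1/n}$ depends only on the norm of $T$ and not on the ambient algebra, it suffices to treat an arbitrary nonzero $T=L_{\varphi}\in\mathscr{L}_{\Lambda}$, where $\varphi=T\xi_e\in\mathcal{H}_{\Lambda}$.

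First I would invoke the well-ordering from \Cref{lem:well-ordered set}: since $\varphi\neq0$, its support $\{w\colon\varphi(w)\neq0\}$ has a $\preccurlyeq$-minimal element $w_0$, which I call the \emph{leading word} of $\varphi$. The crux is that the leading word is multiplicative under convolution: if $\psi,\chi\in\mathcal{H}_{\Lambda}$ have leading words $a,b$, then $\psi*\chi$ has leading word $ab$ with coefficient $(\psi*\chi)(ab)=\psi(a)\chi(b)$. Indeed, $(\psi*\chi)(x)=\sum_{uv=x}\psi(u)\chi(v)$, and every nonvanishing term forces $u\succcurlyeq a$ and $v\succcurlyeq b$, so by multiplication-invariance of $\preccurlyeq$ one has $x=uv\succcurlyeq ab$. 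For $x=ab$ itself, a length comparison (a proper prefix is strictly shorter, hence strictly $\prec$) shows that the only contributing factorization is $u=a$, $v=b$, so no cancellation occurs and the leading coefficient is exactly $\psi(a)\chi(b)$.

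Iterating this claim gives that the $n$-fold convolution $\varphi^{*n}$ has leading word $w_0^n$ with $\varphi^{*n}(w_0^n)=\varphi(w_0)^n\neq0$. Since $T^n=L_{\varphi^{*n}}$ and $T^n\xi_e=\varphi^{*n}$, I obtain
\begin{equation*}
\|T^n\|\geqslant\|T^n\xi_e\|_2=\|\varphi^{*n}\|_2\geqslant|\varphi^{*n}(w_0^n)|=|\varphi(w_0)|^n,
\end{equation*}
whence $r(T)=\lim_{n}\|T^n\|^{1/n}\geqslant|\varphi(w_0)|>0$. Thus $T$ is not quasinilpotent and cannot lie in the radical; as $T$ was an arbitrary nonzero element, both $\mathfrak{L}_{\Lambda}$ and $\mathfrak{A}_{\Lambda}$ are semisimple.

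I expect the only delicate point to be the no-cancellation step, namely that the $\preccurlyeq$-minimal word in a convolution is attained by a unique factorization. This is precisely where the two properties of the order are used together: well-ordering guarantees that a leading word exists, while multiplication-invariance (combined with the fact that shorter words are smaller) forces the minimal product to arise only from the concatenation of the two leading words. Everything else reduces to the routine spectral-radius estimate above and the standard fact that radical elements of a unital Banach algebra are quasinilpotent.
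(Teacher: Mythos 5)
Your proof is correct and follows exactly the route the paper intends: the paper's own ``proof'' merely cites Lemmas 4.4 and 4.5 of \cite{Hua21}, whose content is precisely your argument that the well-ordered, multiplication-invariant order of \Cref{def:order} and \Cref{lem:well-ordered set} makes the leading coefficient multiplicative under convolution, so no nonzero $L_{\varphi}$ is quasinilpotent and the Jacobson radical is trivial. The only difference is that you supply the details in full rather than outsourcing them to the companion paper.
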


\begin{proof}
It follows from \cite[Lemma 4.4 and Lemma 4.5]{Hua21}.
\end{proof}

\begin{remark}
It was proved that $\mathfrak{L}_{\Lambda}$ is semisimple when $|\Lambda|=n$ in \cite{DP99}.
\end{remark}

\begin{theorem}\label{thm:continuity derivation}
All derivations of  $\mathfrak{L}_{\Lambda}$ and $\mathfrak{A}_{\Lambda}$ are automatically continuous.
\end{theorem}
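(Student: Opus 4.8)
The plan is to deduce the statement directly from the Johnson--Sinclair continuity theorem, which asserts that every derivation of a semisimple Banach algebra into itself is automatically continuous \cite[Theorem 4.1]{JS68}. All of the genuine work has in fact already been done: the well-order on $\mathbb{F}^+_{\Lambda}$ produced in Lemma \ref{lem:well-ordered set} is precisely the device used to control the Jacobson radical, and the resulting semisimplicity of both algebras is recorded in Proposition \ref{prop:semisimple}. Once semisimplicity is available, no analytic estimate on $D$ is needed and the conclusion $\|D\|<\infty$ follows formally.

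Before invoking \cite[Theorem 4.1]{JS68} I would first confirm that $\mathfrak{L}_{\Lambda}$ and $\mathfrak{A}_{\Lambda}$ are honest Banach algebras under the operator norm. For $\mathfrak{A}_{\Lambda}$ this is automatic, since it is norm-closed by definition. For $\mathfrak{L}_{\Lambda}$ I would note that norm convergence in $B(\mathcal{H}_{\Lambda})$ implies weak-operator convergence, so any weak-operator closed subalgebra is a fortiori norm-closed; hence $\mathfrak{L}_{\Lambda}$ is a closed subalgebra of $B(\mathcal{H}_{\Lambda})$ and thus a Banach algebra. I would also remark that the Johnson--Sinclair theorem imposes no separability or countability hypothesis, so the argument is insensitive to whether $|\Lambda|$ is finite, countable, or uncountable.

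With these two points in place the proof is a one-line application: since Proposition \ref{prop:semisimple} gives that $\mathfrak{L}_{\Lambda}$ and $\mathfrak{A}_{\Lambda}$ are semisimple Banach algebras, \cite[Theorem 4.1]{JS68} applies to each and shows that every derivation $D$ of $\mathfrak{L}_{\Lambda}$ (respectively $\mathfrak{A}_{\Lambda}$) into itself satisfies $\|D\|<\infty$, which is exactly the assertion. The only place where difficulty could reappear is the semisimplicity input itself: in a self-contained treatment one shows that a nonzero $L_{\varphi}\in\mathfrak{L}_{\Lambda}$ has a well-defined $\preccurlyeq$-minimal word in its support, and the multiplication-invariance of the order from Definition \ref{def:order} guarantees that this leading word is not annihilated by multiplication, which precludes nonzero radical (quasinilpotent) elements. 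Since that argument is already packaged into Proposition \ref{prop:semisimple}, I expect the present theorem to require nothing beyond the citation.
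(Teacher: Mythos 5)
Your proposal matches the paper's own proof exactly: semisimplicity of $\mathfrak{L}_{\Lambda}$ and $\mathfrak{A}_{\Lambda}$ is taken from Proposition \ref{prop:semisimple} and continuity then follows from \cite[Theorem 4.1]{JS68}. The additional remarks you make (norm-closedness of $\mathfrak{L}_{\Lambda}$ and the absence of separability hypotheses in Johnson--Sinclair) are correct and harmless elaborations of the same one-line argument.
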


\begin{proof}
By \cite[Theorem 4.1]{JS68}, every derivation on a semisimple Banach algebra is continuous.
Then we obtain the conclusion.
\end{proof}

In \cite{BC74}, Bade and Curtis studied the automatic continuity of derivations from a semisimple Banach algebra into a Banach bimodule and they provided several sufficient conditions.
We would like to ask the following question.

\begin{question}
Is each derivation $D\colon\mathfrak{A}_{\Lambda}\mapsto\mathfrak{L}_{\Lambda}$ automatically continuous?
\end{question}

\section{The first continuous cohomology groups}\label{sec:first cohomology}
A derivation $D$ of a unital Banach algebra $\mathcal{B}$ into a Banach  $\mathcal{B}$-bimodule $\mathcal{X}$ is called an {\em inner derivation} if there exists an element $T\in\mathcal{X}$ satisfying that $D(A)=AT-TA$ for all $A\in\mathcal{B}$.
We denote $D$ by $D_T$.
Let $\mathscr{D}(\mathcal{B},\mathcal{X})$ and $\mathscr{I}(\mathcal{B},\mathcal{X})$ be the linear space of all continuous derivations and inner derivations from $\mathcal{B}$ into $\mathcal{X}$, respectively.
The quotient space $\mathscr{H}^1(\mathcal{B},\mathcal{X})
=\mathscr{D}(\mathcal{B},\mathcal{X})/\mathscr{Inn}(\mathcal{B},\mathcal{X})$ is called {\em the first continuous cohomology group} of the Banach algebra $\mathcal{B}$ with coefficients in $\mathcal{X}$.

Recall that the non-commutative disc algebra $\mathfrak{A}_{\Lambda}$ is a Banach subalgebra of $\mathfrak{L}_{\Lambda}$.
We will compute the first continuous cohomology group $\mathscr{H}^1(\mathfrak{A}_{\Lambda},\mathfrak{L}_{\Lambda})$.
Specifically, we allow the number of generators to be uncountable.

\subsection{Locally inner derivations}
Let $D\in\mathscr{D}(\mathfrak{A}_{\Lambda},\mathfrak{L}_{\Lambda})$ be a continuous derivation.
We say that $D$ is {\em locally inner} at $w\in\mathbb{F}^+_{\Lambda}$ if there exists an operator $T_w\in\mathfrak{L}_{\Lambda}$ such that
\begin{equation*}
  D(L_w)=L_wT_w-T_wL_w.
\end{equation*}
In this subsection, we will show that every continuous derivation $D$ from $\mathfrak{A}_{\Lambda}$ into $\mathfrak{L}_{\Lambda}$ is locally inner at any $w\in\mathbb{F}^+_{\Lambda}$.

\begin{lemma}\label{lem:commutative zero}
Let $w\in\mathbb{F}^+_{\Lambda}$ and $D(L_w)=L_{\varphi}\in\mathfrak{L}_{\Lambda}$.
Then
\begin{equation*}
  \varphi(u)=0
\end{equation*}
for any $ u\in \mathbb{F}^+_{\Lambda}$ with $wu=uw$.
\end{lemma}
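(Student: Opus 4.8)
The plan is to play the Leibniz rule on the powers $L_{w}^{n}=L_{w^{n}}$ against the automatic continuity of $D$ (Theorem~\ref{thm:continuity derivation}). A routine induction gives
\[
  D(L_{w^{n}})=\sum_{i=0}^{n-1}L_{w}^{\,i}\,D(L_{w})\,L_{w}^{\,n-1-i}
  =\sum_{i=0}^{n-1}L_{w^{i}}L_{\varphi}L_{w^{n-1-i}},
\]
and, because $L_{w^{n}}$ is an isometry, each of these operators has norm at most $\|D\|$, a bound independent of $n$. The guiding idea is that the single number $\varphi(u)$ gets \emph{amplified} by a factor $n$ inside one carefully chosen coefficient of $D(L_{w^{n}})$; since that coefficient is dominated by $\|D\|$ for every $n$, letting $n\to\infty$ will force $\varphi(u)=0$.

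To locate the right coefficient I would invoke the commutation hypothesis. We may assume $w\ne e$, the case $w=e$ being trivial since then $D(L_{w})=D(I)=0$. As $wu=uw$, the structural fact recalled before Lemma~\ref{lem:commutative} provides a common root $v\in\mathbb{F}^{+}_{\Lambda}$ with $w=v^{p}$ and $u=v^{q}$ for integers $p\geqslant1$ and $q\geqslant0$. Because powers of $v$ commute, for every $i$ we have
\[
  w^{i}\,u\,w^{\,n-1-i}=v^{ip}v^{q}v^{(n-1-i)p}=v^{(n-1)p+q},
\]
a word $x:=v^{(n-1)p+q}$ that does not depend on $i$. Expanding $L_{w^{i}}L_{\varphi}L_{w^{n-1-i}}=\sum_{u'}\varphi(u')L_{w^{i}u'w^{n-1-i}}$, the coefficient of $D(L_{w^{n}})$ at $x$ equals $\sum_{i=0}^{n-1}\sum_{u'\colon w^{i}u'w^{n-1-i}=x}\varphi(u')$.

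The decisive step, which is also where the only real care is needed, is to verify that for each fixed $i$ the equation $w^{i}u'w^{\,n-1-i}=x$ has the unique solution $u'=u$, so that no extraneous coefficients $\varphi(u')$ contaminate the sum. This follows from two cancellations in $\mathbb{F}^{+}_{\Lambda}$: since $v^{ip}$ is a left divisor of $x$, left cancellation gives $u'v^{(n-1-i)p}=v^{(n-1-i)p+q}$, and rewriting the right-hand side as $v^{q}v^{(n-1-i)p}$, right cancellation yields $u'=v^{q}=u$. Hence the coefficient of $D(L_{w^{n}})$ at $x$ is exactly $n\varphi(u)$. Finally, for any $L_{\psi}\in\mathfrak{L}_{\Lambda}$ one has $|\psi(x)|=|\langle L_{\psi}\xi_{e},\xi_{x}\rangle|\leqslant\|L_{\psi}\|$, so
\[
  n\,|\varphi(u)|\leqslant\|D(L_{w^{n}})\|\leqslant\|D\|\,\|L_{w^{n}}\|=\|D\|
\]
for all $n\geqslant1$, and letting $n\to\infty$ gives $\varphi(u)=0$. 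I expect the uniqueness/cancellation bookkeeping in this last paragraph to be the main obstacle; once it is secured, the amplification-plus-boundedness argument closes the proof at once.
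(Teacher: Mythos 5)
Your proposal is correct and follows essentially the same route as the paper: both iterate the Leibniz rule to get $D(L_{w}^{n})=\sum_{i=0}^{n-1}L_{w}^{i}D(L_{w})L_{w}^{n-1-i}$, use the commutation $wu=uw$ to make all $n$ terms contribute to the same word $w^{n-1}u$ so that its coefficient is amplified to $n\varphi(u)$, and conclude from the uniform bound $\|D(L_{w}^{n})\|\leqslant\|D\|$. The only differences are cosmetic --- the paper reads off all commuting $u$ simultaneously from $\|D(L_{w}^{k})\xi_{e}\|_{2}^{2}$ rather than one matrix coefficient at a time, and your uniqueness check via the common-root decomposition $w=v^{p}$, $u=v^{q}$ is valid (though plain cancellation from $w^{i}u'w^{n-1-i}=w^{i}uw^{n-1-i}$ already suffices); also note that in this section the continuity of $D\colon\mathfrak{A}_{\Lambda}\to\mathfrak{L}_{\Lambda}$ is a standing hypothesis rather than a consequence of \Cref{thm:continuity derivation}, which concerns derivations of the algebras into themselves.
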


\begin{proof}
First we write
\begin{align*}
    D(L_w)=\sum\limits_{u\in \mathbb{F}^+_{\Lambda},\, wu=uw}\varphi(u)L_u+\sum\limits_{u\in \mathbb{F}^+_{\Lambda},\, wu\ne uw}\varphi(u)L_u.
\end{align*}
By induction, we have
\begin{equation*}\label{eq:D(g^k)}
  D(L_w^k)=L_w^{k-1}D(L_w)+L_w^{k-2}D(L_w)L_w+\cdots +D(L_w)L_w^{k-1}, \quad \forall k\geqslant 1.
\end{equation*}
It follows that
\begin{align*}
  D(L_w^k)&=kL_w^{k-1}\sum\limits_{u\in\mathbb{F}^+_{\Lambda},\,wu=uw} \varphi(u)L_u\\
  &\quad+\sum\limits_{u\in \mathbb{F}^+_{\Lambda},\,wu\ne uw}\varphi(u)(L_w^{k-1}L_u+L_w^{k-2}L_uL_w+\cdots+L_uL_w^{k-1}).
\end{align*}
Then
\begin{align*}
  \big\|D(L_w^k) \xi_{e}\big\|_2^2
  &=\left\|k\sum\limits_{u\in \mathbb{F}^+_{\Lambda},\, wu=uw}\varphi(u) \xi_{w^{k-1}u}\right\|_2^2\\
  &\quad+\left\|\sum\limits_{u\in \mathbb{F}^+_{\Lambda},\, wu\ne uw}\varphi(u)(\xi_{w^{k-1}u}+\xi_{w^{k-2}uw}+\cdots
  +\xi_{uw^{k-1}})\right\|_2^2.
\end{align*}
Since the derivation $D$ is continuous, for any $k\geqslant 1$, we have
\begin{equation*}
  k^2\sum\limits_{u\in \mathbb{F}^+_{\Lambda},\, wu=uw}|\varphi(u)|^2
  \leqslant\big\Vert D(L_w^k)\xi_{e}\big\Vert_2^2 \leqslant \Vert D\Vert^2
  <\infty.
\end{equation*}
Thus $\varphi(u)=0$ for any $u\in \mathbb{F}^+_{\Lambda}$ with $wu=uw$.
\end{proof}

\begin{lemma}\label{lem:small length zero}
Let $w\in\mathbb{F}^+_{\Lambda}$ and $D(L_w)=L_{\varphi}\in\mathfrak{L}_{\Lambda}$.
Then
\begin{equation*}
  \varphi(u)=0
\end{equation*}
for any $ u\in\mathbb{F}^+_{\Lambda}$ with $|u|<|w|$.
\end{lemma}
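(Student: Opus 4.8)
The plan is to prove the statement by induction on the length $|w|$, peeling off the leading generator and reducing everything to the single fact that the coefficient of the identity in the derivative of a generator vanishes. For the base case $|w|=1$, write $w=z_\alpha$; the only word $u$ with $|u|<1$ is the empty word $e$, and since $z_\alpha e=ez_\alpha$, Lemma \ref{lem:commutative zero} (applied with $w=z_\alpha$ and $u=e$) already gives $\varphi(e)=0$.

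For the inductive step I would write $w=z_\alpha w'$ with $|w'|=|w|-1$, so that $L_w=L_\alpha L_{w'}$. Since $D$ maps into $\mathfrak{L}_\Lambda=\mathscr{L}_\Lambda$ (Proposition \ref{prop: two algebras same}), both $D(L_\alpha)$ and $D(L_{w'})$ are left convolution operators, say $D(L_\alpha)=L_{\varphi_\alpha}$ and $D(L_{w'})=L_{\varphi_{w'}}$. The Leibniz rule together with the composition law $L_{\psi_1}L_{\psi_2}=L_{\psi_1*\psi_2}$ yields
\[
  L_\varphi=D(L_w)=L_{\varphi_\alpha}L_{\xi_{w'}}+L_{\xi_{z_\alpha}}L_{\varphi_{w'}}
  =L_{\varphi_\alpha*\xi_{w'}+\xi_{z_\alpha}*\varphi_{w'}},
\]
so that $\varphi=\varphi_\alpha*\xi_{w'}+\xi_{z_\alpha}*\varphi_{w'}$. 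I then evaluate this at an arbitrary $u$ with $|u|<|w|$. The first convolution $(\varphi_\alpha*\xi_{w'})(u)$ is supported on words ending in $w'$, hence vanishes unless $u=xw'$; since $|u|\leqslant|w'|$ this forces $x=e$ and $u=w'$, giving the value $\varphi_\alpha(e)$, which is $0$ by the base case. The second convolution $(\xi_{z_\alpha}*\varphi_{w'})(u)$ is supported on words beginning with $z_\alpha$, hence vanishes unless $u=z_\alpha t$ with $|t|=|u|-1<|w'|$, in which case its value is $\varphi_{w'}(t)$, which is $0$ by the induction hypothesis applied to $w'$. Adding the two contributions gives $\varphi(u)=0$.

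The main conceptual point is why I would \emph{not} simply imitate the proof of Lemma \ref{lem:commutative zero}. That argument expands $D(L_w^k)$ and extracts a factor of $k$ from the orthonormal vectors $\xi_{w^j u w^{k-1-j}}$, and for $u$ with $wu\ne uw$ these vectors are indeed pairwise distinct. The obstacle is that distinct pairs $(j,u')$ and $(j',u'')$ may collide, and a short analysis shows a collision forces $u'$ to be the length-$|u|$ prefix of $w$ and $u''$ its length-$|u|$ suffix. Thus every short non-commuting word is ``free'' except this prefix/suffix pair, but the two can be cyclic conjugates through $w$ (for instance $w=z_\alpha z_\beta z_\alpha$ gives prefix $z_\alpha z_\beta$ and suffix $z_\beta z_\alpha$ with $(z_\alpha z_\beta)w=w(z_\beta z_\alpha)$), and the growth estimate then only yields the relation $\varphi(z_\alpha z_\beta)+\varphi(z_\beta z_\alpha)=0$ rather than the vanishing of each coefficient. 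The induction above sidesteps this combinatorial difficulty entirely, at the modest cost of relying on the multiplicativity of convolution and on Lemma \ref{lem:commutative zero} for the single base identity $\varphi_\alpha(e)=0$.
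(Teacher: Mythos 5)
Your proof is correct and is essentially the paper's argument: the paper expands $D(L_w)$ by the Leibniz rule over all $n$ letters of $w$ at once and observes that each term $L_{\alpha_1}\cdots D(L_{\alpha_i})\cdots L_{\alpha_n}$ is supported on words of length at least $|w|$ because the coefficient of $L_e$ in $D(L_{\alpha_i})$ vanishes by \Cref{lem:commutative zero}, while you reach the same conclusion by peeling off one generator and inducting on $|w|$, with the convolution formula making the support analysis explicit. The key input is identical in both write-ups, so the difference is only one of bookkeeping.
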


\begin{proof}
Let $w=z_{\alpha_1}z_{\alpha_2}\cdots z_{\alpha_n}$.
Then
\begin{equation*}
  D(L_w)=D(L_{\alpha_1})L_{\alpha_2}\cdots L_{\alpha_n}
  +L_{\alpha_1}D(L_{\alpha_2})L_{\alpha_3}\cdots L_{\alpha_n}+\cdots+L_{\alpha_1}\cdots L_{\alpha_{n-1}}D(L_{\alpha_n}).
\end{equation*}
Since the coefficient of $L_e$ in $D(L_{\alpha})$ vanishes for every $\alpha\in\Lambda$ by \Cref{lem:commutative zero}, it is easy to get the conclusion.
\end{proof}

\begin{lemma}\label{lem:conj and derivation}
Let $w\in\mathbb{F}^+_{\Lambda}$ and $D(L_w)=L_{\varphi}\in\mathfrak{L}_{\Lambda}$.
Then
\begin{equation*}
  D(L_w)=S_w-L_w^*S_wL_w
\end{equation*}
for some $S_w\in \mathfrak{L}_{\Lambda}$.
\end{lemma}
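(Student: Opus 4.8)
The plan is to adapt the averaging argument for derivations of von Neumann algebras, using the completely contractive map $\phi\colon\mathfrak{L}_{\Lambda}\to\mathfrak{L}_{\Lambda}$ given by $\phi(X)=L_w^*XL_w$. This is well defined by \Cref{lem:conjugation} since $L_w$ is an isometry, and on kernels it acts by $\phi(L_{\varphi})=\sum_v\varphi(wvw^{-1})L_v$. Writing $D(L_w)=L_{\varphi}$ and setting $T_k=\sum_{i=0}^{k-1}\phi^i(D(L_w))\in\mathfrak{L}_{\Lambda}$, a telescoping computation gives the cohomological identity $(I-\phi)T_k=D(L_w)-\phi^k(D(L_w))$. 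The operator $S_w$ we seek is the formal Neumann sum $\sum_{i\ge0}\phi^i(D(L_w))$, and the whole proof amounts to showing that this sum defines a bounded operator in $\mathfrak{L}_{\Lambda}$ and that the identity survives in the limit.

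First I would record the kernel of the candidate. Since $\phi^i(L_{\varphi})$ has kernel $v\mapsto\varphi(w^ivw^{-i})$, the kernel of $T_k$ is $\rho_k(v)=\sum_{i=0}^{k-1}\varphi(w^ivw^{-i})$. \Cref{lem:commutative} (applied as in \Cref{lem:commutative zero}) shows that for each fixed $v$ only finitely many $i$ contribute: once $i\ge|v|/|w|+1$, the relation $w^ivw^{-i}=u$ forces $uw=wu$, whence $\varphi(u)=0$ by \Cref{lem:commutative zero}. Hence $\rho_k(v)$ stabilises to $\psi(v)=\sum_{i\ge0}\varphi(w^ivw^{-i})$, a finite sum, which moreover satisfies $\psi(v)-\psi(wvw^{-1})=\varphi(v)$; the same reasoning shows that $\phi^k(D(L_w))\to0$ in the weak-operator topology, since its kernel vanishes pointwise for large $k$.

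The main obstacle is to prove that the partial sums are uniformly bounded, namely $\sup_k\|T_k\|\le\|D\|$; without this the stabilisation of the kernels is not enough to place $S_w=L_{\psi}$ in the weak-operator closed algebra $\mathfrak{L}_{\Lambda}$. This is exactly where the continuity of $D$ must be exploited, through the estimates $\|D(L_w^k)\|\le\|D\|$ valid for every $k$. The key computation is to expand $D(L_w^k)=\sum_{j}L_w^jD(L_w)L_w^{k-1-j}$ and to evaluate the coefficient of $D(L_w^k)\eta$ at the word $w^{k-1}q$, for an arbitrary $\eta=\sum_y a_y\xi_y$. Cancelling the prefix $w^j$ in each equation $w^juw^{k-1-j}y=w^{k-1}q$ and comparing the first $i|w|$ symbols (where $i=k-1-j$) shows that $w^i$ is a prefix of $uw^i$, so every contributing triple satisfies $uw^i=w^iv$ with $v\mid q$, with no spurious solutions; consequently this coefficient equals exactly $(T_k\eta)(q)$. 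Since the words $w^{k-1}q$ are distinct, one obtains $\|T_k\eta\|_2^2=\sum_q|(T_k\eta)(q)|^2\le\|D(L_w^k)\eta\|_2^2\le\|D\|^2\|\eta\|_2^2$, giving the uniform bound. I expect the exact coefficient identification, and in particular the absence of spurious solutions, to be the delicate heart of the argument.

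Finally, the uniform bound together with the pointwise stabilisation of the kernels gives $T_k\to S_w:=L_{\psi}$ in the weak-operator topology, so $S_w$ lies in the weak-operator closed algebra $\mathfrak{L}_{\Lambda}$ with $\|S_w\|\le\|D\|$. Passing to the weak-operator limit in $(I-\phi)T_k=D(L_w)-\phi^k(D(L_w))$, using that $X\mapsto L_w^*XL_w$ is weak-operator continuous and that $\phi^k(D(L_w))\to0$, yields $(I-\phi)S_w=D(L_w)$, that is $D(L_w)=S_w-L_w^*S_wL_w$, as required.
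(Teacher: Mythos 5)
Your argument is correct and follows essentially the same route as the paper: your partial sums $T_k=\sum_{i=0}^{k-1}L_w^{*i}D(L_w)L_w^{i}$ coincide with the paper's $L_w^{*(k-1)}D(L_w^k)$ (by the Leibniz rule and $L_w^{*}L_w=I$), and both proofs obtain $S_w$ as a weak-operator limit by combining the stabilization of matrix entries (via \Cref{lem:commutative} and \Cref{lem:commutative zero}) with the uniform bound $\|T_k\|\leqslant\|D\|$, then pass to the limit in the telescoping/recursion identity. The only difference is cosmetic: the coefficient computation you describe as the delicate heart of the argument is precisely the operator identity $T_k=L_w^{*(k-1)}D(L_w^k)$, from which the paper reads off the norm bound in one line.
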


\begin{proof}
Since $D(L_e)=0$, we take $S_e=0$.
Now we assume that $w\ne e$.
By the definition of derivation and \Cref{lem:conjugation}, we have
\begin{equation*}\label{eq:g^{*(k-1)D(g^k)}}
  L_w^{*(k-1)}D(L_w^k)=\sum_{m=0}^{k-1}L_w^{*m}D(L_w)L_w^{m} \in \mathfrak{L}_{\Lambda}.
\end{equation*}
On the other hand, for any $u,v\in \mathbb{F}^+_{\Lambda}$, we have
\begin{equation*}
  \langle L_w^{*m}D(L_w)L_w^m \xi_{u}, \xi_{v}\rangle
  =\sum\limits_{h \in \mathbb{F}^+_{\Lambda}} \varphi(h)\langle \xi_{hw^{m}u}, \xi_{w^{m}v}\rangle=
  \begin{cases}
    \varphi(h), & hw^mu=w^mv,\\
    0, & \mbox{otherwise}.
  \end{cases}
\end{equation*}
If $hw^mu=w^mv$ for some $h\in \mathbb{F}^+_{\Lambda}$, then $|v|\geqslant|u|$ and there exists $u_{0} \in \mathbb{F}^+_{\Lambda}$ such that $v=u_{0}u$.
Hence $hw^m=w^mu_{0}$.
If $m\geqslant\frac{|u_{0}|}{|w|}+1$, then $wh=hw$ by \Cref{lem:commutative} and  $\varphi(h)=0$ by \Cref{lem:commutative zero}.
Therefore,
\begin{equation*}
\langle L_w^{*m}D(L_w)L_w^m \xi_{u}, \xi_{v}\rangle
=0, \quad \forall m \geqslant  \frac{|v|-|u|}{|w|}+1.
\end{equation*}
Since $\{L_w^{*(k-1)}D(L_w^k)\}_{k \geqslant 1}$ is a family of uniformly bounded operators in $\mathfrak{L}_{\Lambda}$, it converges to an operator $S_w\in\mathfrak{L}_{\Lambda}$ in the weak-operator topology, which is given by
\begin{equation*}
  \langle S_{w}\xi_{u},\xi_{v}\rangle
  =\lim\limits_{k\rightarrow \infty} \langle L_w^{*(k-1)}D(L_w^k)\xi_{u}, \xi_{v}\rangle,\quad\forall u, v \in \mathbb{F}^+_{\Lambda}.
\end{equation*}
Note that
\begin{equation*}
L_w^{*k}D(L_w^{k+1})
=L_w^{*k}\big(L_w^kD(L_w)+D(L_w^k)L_w\big)
=D(L_w)+L_w^* \big(L_w^{*(k-1)}D(L_w^k)\big)L_w.
\end{equation*}
Taking the weak-operator limit on the both sides, we obtain that
\begin{align*}
    S_{w}=D(L_w)+L_w^*S_{w}L_w.
\end{align*}
The proof is complete.
\end{proof}

Now we are able to show that $D$ is locally inner.

\begin{proposition}\label{prop:local inner derivation}
Let $w\in \mathbb{F}^+_{\Lambda}$, there exists $T_w\in \mathfrak{L}_{\Lambda}$ such that $D(L_w)=L_wT_w-T_wL_w$.
\end{proposition}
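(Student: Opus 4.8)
The plan is to bootstrap from Lemma~\ref{lem:conj and derivation}, which already gives $D(L_w)=S_w-L_w^*S_wL_w$ for some $S_w\in\mathfrak{L}_{\Lambda}$. The obstruction is precisely that $L_w^*$ does not lie in $\mathfrak{L}_{\Lambda}$, so the expression $S_w-L_w^*S_wL_w$ is not manifestly of the inner form $L_wT_w-T_wL_w$. The key idea is to decompose $S_w$ according to how it interacts with the isometry $L_w$ under the Cuntz-type relations. Writing $P_w=L_wL_w^*$ for the projection onto $\mathrm{range}(L_w)=\ell^2(w\mathbb{F}^+_{\Lambda})$, I would first observe that $L_w^*S_wL_w\in\mathfrak{L}_{\Lambda}$ by \Cref{lem:conjugation}, and try to isolate the ``off-diagonal'' part of $S_w$ relative to the splitting $\mathcal{H}_{\Lambda}=\mathrm{range}(L_w)\oplus\mathrm{range}(L_w)^\perp$.

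First I would expand $S_w$ in the basis $\{L_u\}$ as $S_w=\sum_{u}s(u)L_u$ and compute $L_w^*S_wL_w=\sum_{v}s(wvw^{-1})L_v$ using the formula from the proof of \Cref{lem:conjugation}, where $s(wvw^{-1})=0$ unless $wvw^{-1}\in\mathbb{F}^+_{\Lambda}$, i.e. unless $wv=v'w$ has a solution. The plan is to define $T_w$ by collecting the part of $S_w$ indexed by words $u$ that are \emph{not} left-divisible by $w$, and to show that the ``correction'' produced by the remaining terms telescopes. Concretely I expect to set $T_w=\sum_{u:\,w\nmid u}s(u)L_u$ (or a closely related sum obtained by grouping words according to their maximal power-of-$w$ left factor), and then verify by direct computation that $L_wT_w-T_wL_w$ reproduces $S_w-L_w^*S_wL_w=D(L_w)$. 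The bookkeeping here relies on \Cref{lem:small length zero} (so that $s(u)=0$ for $|u|<|w|$, keeping the sums well-behaved) and \Cref{lem:commutative zero} (so that coefficients on words commuting with $w$ vanish), which together ensure the decomposition is clean and that $T_w$ genuinely lies in $\mathfrak{L}_{\Lambda}$ with the relevant sum square-summable.

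The main obstacle I anticipate is establishing that the operator $T_w$ so defined is bounded, i.e. actually an element of $\mathfrak{L}_{\Lambda}=\mathscr{L}_{\Lambda}$ rather than merely a formal convolution series. Since $S_w\in\mathfrak{L}_{\Lambda}$ is bounded, I would argue that $T_w$ arises from $S_w$ by a completely contractive compression — specifically, by a projection operation onto the words not left-divisible by $w$, which can be realized via the conditional-expectation and Ces\`aro machinery of \Cref{lem: Cesaro operators} and \Cref{lem:conditional expectation}, or more directly by exhibiting $T_w$ as a strong-operator limit of finite truncations that are uniformly bounded by $\|S_w\|$. Verifying this uniform bound is where the real work lies; the algebraic identity $D(L_w)=L_wT_w-T_wL_w$ itself should then follow from the telescoping relation $S_w=D(L_w)+L_w^*S_wL_w$ of \Cref{lem:conj and derivation} by substituting the expansion and matching coefficients word-by-word. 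Once boundedness and the coefficient-matching are in hand, the conclusion $D(L_w)=L_wT_w-T_wL_w$ with $T_w\in\mathfrak{L}_{\Lambda}$ is immediate, which is exactly the assertion that $D$ is locally inner at $w$.
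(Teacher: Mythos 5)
There is a genuine gap, and it is not where you locate it. Write $S_w=L_\psi$, discard the words of length $<|w|$ (harmless by \Cref{lem:small length zero}), and split the remainder as $T_0=L_wT_w+B$, where $T_w=L_w^*T_0=\sum_{u}\psi(wu)L_u$ collects the words that \emph{are} left-divisible by $w$ and $B=\sum_{|u|\geqslant|w|,\,w\nmid u}\psi(u)L_u$ is the rest. Since $L_w^*L_uL_w=0$ when $|u|\geqslant|w|$ and $w\nmid u$, coefficient matching gives $D(L_w)=L_wT_w-T_wL_w+B$, and nothing algebraic forces $B=0$: the words carrying $B$ need not commute with $w$, so \Cref{lem:commutative zero} does not apply to them, and they are coefficients of $S_w$ rather than of $D(L_w)$ in any case. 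Your primary candidate $T_w=\sum_{w\nmid u}\psi(u)L_u$ is in fact this residual $B$, i.e.\ the wrong piece: for $w=z_\alpha$ and $S_w=L_{z_\beta z_\alpha}$ with $\beta\neq\alpha$ one has $S_w-L_w^*S_wL_w=L_{z_\beta z_\alpha}$, whereas $L_wS_w-S_wL_w=L_{z_\alpha z_\beta z_\alpha}-L_{z_\beta z_\alpha^2}$.

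The missing idea is analytic and is the same device as in the proof of \Cref{lem:commutative zero}: apply $D$ to the powers $L_w^k$. Induction gives $D(L_w^k)-L_w^kT_w+T_wL_w^k=\sum_{|u|\geqslant|w|,\,w\nmid u}\psi(u)\,(L_{w^{k-1}u}+L_{w^{k-2}uw}+\cdots+L_{uw^{k-1}})$; the words $w^{k-1-j}uw^{j}$ are pairwise distinct precisely because $w\nmid u$, so evaluating at $\xi_e$ yields $k\sum_{u}|\psi(u)|^2\leqslant(\|D\|+2\|T_w\|)^2$ for every $k$, forcing $B=0$ and hence $T_0=L_wT_w$. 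By contrast, the boundedness of $T_w$, which you single out as the main obstacle, is immediate: $T_0=L_\psi-\sum_{j=0}^{|w|-1}\Phi_j(L_\psi)$ is bounded because each $\Phi_j$ is contractive, and $T_w=L_w^*T_0$ is a product of bounded operators that is visibly a left convolution operator, hence lies in $\mathscr{L}_\Lambda=\mathfrak{L}_\Lambda$ by \Cref{prop: two algebras same}; no Ces\`aro or conditional-expectation machinery is needed for this step.
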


\begin{proof}
By \Cref{lem:conj and derivation}, there exists an operator $L_{\psi}\in \mathfrak{L}_{\Lambda}$ for some $\psi\in\mathcal{H}_{\Lambda}$ such that
\begin{equation*}
  D(L_w)=L_{\psi}-L_w^*L_{\psi}L_w.
\end{equation*}
Since $\Phi_j$ is contractive for any $j\geqslant0$, we can define
\begin{equation*}
  T_0:=L_{\psi}-\sum_{j=0}^{|w|-1}\Phi_{j}(L_{\psi})
  =\sum\limits_{u\in \mathbb{F}^+_{\Lambda},\,|u|\geqslant |w|}\psi(u)L_u\in \mathfrak{L}_{\Lambda}.
\end{equation*}
It is clear that
\begin{equation*}
  D(L_w)-(T_0-L_w^*T_0L_w)=\sum\limits_{u\in \mathbb{F}^+_{\Lambda},\,|u|<|w|}\psi(u)(L_u-L_w^*L_uL_w).
\end{equation*}
By \Cref{lem:small length zero}, both sides of the above equality are zero.
Hence
\begin{equation*}
  D(L_w)=T_0-L_w^*T_0L_w.
\end{equation*}
We write
\begin{equation*}
  T_0=L_w\sum\limits_{u\in \mathbb{F}^+_{\Lambda}}\psi(wu)L_u+\sum\limits_{u\in \mathbb{F}^+_{\Lambda},\,|u|\geqslant |w|,\, w\nmid u}\psi(u)L_u.
\end{equation*}
Since $L_w^*L_u=0$ when $|u|\geqslant |w|$ and $w\nmid u$, we can define
\begin{align*}
  T_w:=L_w^*T_0=\sum\limits_{u\in \mathbb{F}^+_{\Lambda}} \psi(wu)L_u\in\mathfrak{L}_{\Lambda}.
\end{align*}
Then
\begin{equation*}
  D(L_w)=T_0-L_w^*T_0L_w=L_wT_{w}-T_{w}L_w+\sum\limits_{u\in \mathbb{F}^+_{\Lambda},\,|u|\geqslant |w|,\, w\nmid u} \psi(u)L_u.
\end{equation*}
By induction, for any $k\geqslant 1$, we have
\begin{equation*}
  D(L_w^k)=L_w^kT_{w}-T_{w}L_w^k
  +\sum\limits_{u\in \mathbb{F}^+_{\Lambda},\,|u|\geqslant |w|,\, w\nmid u} \psi(u)(L_{w^{k-1}u}+L_{w^{k-2}uw}+\cdots+L_{uw^{k-1}}).
\end{equation*}
This implies that
\begin{equation*}
  k\sum\limits_{u\in \mathbb{F}^+_{\Lambda},\,|u|\geqslant |w|,\, w\nmid u}|\psi(u)|^2\leqslant\big\|(D(L_w^k)-L_w^kT_{w}+T_{w}L_w^k)\xi_{e}\big\|_2^2
  \leqslant(\|D\|+2\|T_w\|)^2<\infty.
\end{equation*}
Thus $\psi(u)=0$ for any $u\in\mathbb{F}^+_{\Lambda}$ with $|u|\geqslant |w|$ and $w\nmid u$.
It follows that
\begin{equation*}
  T_0=L_wT_w.
\end{equation*}
Hence $D(L_w)=L_wT_{w}-T_{w}L_w$.
\end{proof}

\subsection{Main result}
In this subsection, we prove that every continuous derivation $D$ from $\mathfrak{A}_{\Lambda}$ into $\mathfrak{L}_{\Lambda}$ is inner.

\begin{theorem}\label{thm:first cohomology}
The first continuous cohomology group of $\mathfrak{A}_{\Lambda}$ with coefficients in $\mathfrak{L}_{\Lambda}$ is zero, i.e., $H^1(\mathfrak{A}_{\Lambda},\mathfrak{L}_{\Lambda})=0$.
Equivalently, every continuous derivation of $\mathfrak{A}_{\Lambda}$ into $\mathfrak{L}_{\Lambda}$ is inner.
\end{theorem}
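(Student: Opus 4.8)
The plan is to produce a single $T\in\mathfrak{L}_{\Lambda}$ with $D=D_T$ on all of $\mathfrak{A}_{\Lambda}$. Since $D$ and every inner derivation are continuous and $\mathfrak{A}_{\Lambda}$ is the norm closure of the (unital) algebra generated by $\{L_{\alpha}\}_{\alpha\in\Lambda}$, the Leibniz rule propagates any identity from the generators to all monomials and then, by continuity, to all of $\mathfrak{A}_{\Lambda}$. Thus it suffices to find $T$ with $D(L_{\alpha})=L_{\alpha}T-TL_{\alpha}$ for every $\alpha$. When $|\Lambda|=1$ this is immediate from \Cref{prop:local inner derivation}, so I assume $|\Lambda|\geqslant 2$ and fix distinct $\alpha_0,\beta_0\in\Lambda$.

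First I would use \Cref{prop:local inner derivation} at $w=z_{\alpha_0}$ to write $D(L_{\alpha_0})=L_{\alpha_0}T_{\alpha_0}-T_{\alpha_0}L_{\alpha_0}$ and pass to the continuous derivation $D_1:=D-D_{T_{\alpha_0}}$, which satisfies $D_1(L_{\alpha_0})=0$. The subtle point is to straighten $D_1$ at a second generator without disturbing $L_{\alpha_0}$. For this I would establish the refined local statement: if a continuous derivation vanishes at $L_{\alpha_0}$, then for each $\beta\neq\alpha_0$ there is $T_{\beta}\in\mathfrak{L}_{\Lambda}$ with $D_1(L_{\beta})=L_{\beta}T_{\beta}-T_{\beta}L_{\beta}$ and, crucially, $[T_{\beta},L_{\alpha_0}]=0$. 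Granting this, $D_2:=D_1-D_{T_{\beta_0}}$ is continuous, vanishes at $L_{\beta_0}$, and still vanishes at $L_{\alpha_0}$ since $D_{T_{\beta_0}}(L_{\alpha_0})=L_{\alpha_0}T_{\beta_0}-T_{\beta_0}L_{\alpha_0}=0$ by the commutation.

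The heart of the argument is the propagation step: a continuous derivation $D_2$ with $D_2(L_{\alpha_0})=D_2(L_{\beta_0})=0$ must satisfy $D_2(L_{\gamma})=0$ for every remaining $\gamma$. Here I would exploit that $D_2$ annihilates every word in the two letters $z_{\alpha_0},z_{\beta_0}$, so that for $w=z_{\alpha_0}z_{\gamma}z_{\beta_0}$ the Leibniz rule isolates $D_2(L_w)=L_{\alpha_0}D_2(L_{\gamma})L_{\beta_0}$. Writing $D_2(L_{\gamma})=L_{\varphi}$ and expanding $D_2(L_w^k)=\sum_{m=0}^{k-1}L_w^m\bigl(L_{\alpha_0}L_{\varphi}L_{\beta_0}\bigr)L_w^{k-1-m}$, I would evaluate at $\xi_e$ and read off that the $k$ summands land on the basis vectors $\xi_{w^m z_{\alpha_0}hz_{\beta_0}w^{k-1-m}}$ with $h\in\operatorname{supp}\varphi$. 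If these are pairwise distinct over $0\leqslant m\leqslant k-1$, then $\|D_2(L_w^k)\xi_e\|_2^2=k\sum_h|\varphi(h)|^2$, whereas continuity and $\|L_w^k\|=1$ give $\|D_2(L_w^k)\xi_e\|_2\leqslant\|D_2\|$; letting $k\to\infty$ forces $\varphi=0$. This is the same growth-versus-boundedness mechanism as in \Cref{lem:commutative zero}.

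I expect the distinctness of these basis vectors to be the main obstacle, and it is exactly the reason two vanishing generators (not one) are required: with a single marker $w=z_{\alpha_0}z_{\gamma}$ the insertion pattern is periodic and words collide, for instance the contribution of $\varphi(z_{\gamma}z_{\alpha_0})$ at index $m$ coincides with that of $\varphi(z_{\alpha_0}z_{\gamma})$ at index $m+1$, destroying the linear growth. Flanking a single $L_{\gamma}$ by the two distinct letters $z_{\alpha_0},z_{\beta_0}$ should remove these periodic coincidences, and ruling out any surviving collision is the genuinely combinatorial part, where I would invoke the cancellation law in $\mathbb{F}_{\Lambda}^+$ together with the support constraints on $\varphi$ from \Cref{lem:commutative zero} and \Cref{lem:small length zero}. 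Producing the commuting $T_{\beta}$ in the refined local step is the other technical point; there I would start from the operator furnished by \Cref{prop:local inner derivation} and use $D_1(L_{\alpha_0})=0$ together with the conjugation invariance of \Cref{lem:conjugation} and the weak-limit description of $S_{\beta}$ in \Cref{lem:conj and derivation} to arrange commutation with $L_{\alpha_0}$. Once $D_2$ vanishes on all generators, continuity and the Leibniz rule give $D_2=0$ on $\mathfrak{A}_{\Lambda}$, whence $D=D_{T_{\alpha_0}+T_{\beta_0}}$ is inner; since each $\gamma$ is handled in one stroke, the argument is insensitive to the cardinality of $\Lambda$.
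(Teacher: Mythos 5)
Your outline reproduces the paper's three-step strategy (kill $D$ at one generator, then at a second generator via a $T_{\beta}$ commuting with $L_{\alpha_0}$, then propagate to all remaining generators), but the two technical steps you defer are exactly where the content lies, and the one concrete mechanism you do supply breaks down. In the propagation step, the claimed pairwise distinctness of the words $w^{m}z_{\alpha_0}hz_{\beta_0}w^{k-1-m}$ for $w=z_{\alpha_0}z_{\gamma}z_{\beta_0}$ is false: take $h=z_{\gamma}z_{\beta_0}z_{\alpha_0}$ and $h'=z_{\beta_0}z_{\alpha_0}z_{\gamma}$, so that $z_{\alpha_0}hz_{\beta_0}=wz_{\alpha_0}z_{\beta_0}$ and $z_{\alpha_0}h'z_{\beta_0}=z_{\alpha_0}z_{\beta_0}w$; then
\begin{equation*}
  w^{m}z_{\alpha_0}hz_{\beta_0}w^{k-1-m}
  =w^{m+1}z_{\alpha_0}z_{\beta_0}w^{k-1-m}
  =w^{m+1}z_{\alpha_0}h'z_{\beta_0}w^{k-2-m},
\end{equation*}
so the contribution of $h$ at index $m$ collides with that of $h'$ at index $m+1$ for every $0\leqslant m\leqslant k-2$. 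If $\varphi(h')=-\varphi(h)$, these contributions cancel at all interior indices and $\|D_2(L_w^k)\xi_e\|_2$ stays bounded in $k$, so the growth-versus-boundedness mechanism detects nothing. This $\pm$ cancellation is not a pathological worst case: it is precisely the shape a derivation of this kind actually takes (compare \Cref{eq:Derivation at b}, where $D(L_{\beta})=\sum_k\varphi(z_{\beta}z_{\alpha}^k)(L_{z_{\beta}z_{\alpha}^k}-L_{z_{\alpha}^kz_{\beta}})$), and neither \Cref{lem:commutative zero} nor \Cref{lem:small length zero} excludes a word such as $z_{\gamma}z_{\beta_0}z_{\alpha_0}$ from the support of $\varphi$. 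So the ``genuinely combinatorial part'' you postpone is not a finite check; as formulated, it fails.

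The missing idea is the paper's conjugation trick: since $D(L_{\alpha})=0$, one has $L_{\alpha}^nD(L_{\beta})L_{\alpha}^n=D(L_{z_{\alpha}^nz_{\beta}z_{\alpha}^n})$, and applying \Cref{prop:local inner derivation} to the word $z_{\alpha}^nz_{\beta}z_{\alpha}^n$ for every $n$ forces the support of $D(L_{\beta})$ into $\{z_{\beta}z_{\alpha}^k,z_{\alpha}^kz_{\beta}\colon k\geqslant1\}$ with opposite coefficients. This single structural formula is what simultaneously (i) produces the commuting operator $T_{\beta}=L_{\beta}^*D(L_{\beta})=\sum_k\varphi(z_{\beta}z_{\alpha}^k)L_{z_{\alpha}^k}$ that you only assert can be ``arranged'' from \Cref{lem:conj and derivation}, and (ii) renders the final step immediate: once $D_2(L_{\alpha_0})=D_2(L_{\beta_0})=0$, the support of $D_2(L_{\gamma})$ lies in two disjoint sets and is therefore empty, with no growth estimate needed. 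Without a substitute for this formula, your argument does not close.
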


\begin{proof}
Let $\alpha\in \Lambda$.
By \Cref{prop:local inner derivation}, there exists $T_{\alpha}\in \mathfrak{L}_{\Lambda}$ such that
\begin{equation*}
  D(L_{\alpha})=L_{\alpha}T_{\alpha}-T_{\alpha}L_{\alpha}.
\end{equation*}
Considering the derivation $D-D_{T_{\alpha}}$, we may assume that $D(L_{\alpha})=0$.
Suppose that $|\Lambda|\geqslant 2$ and $\beta\in\Lambda\setminus\{\alpha\}$.
For any $n\geqslant 1$, it follows from \Cref{prop:local inner derivation} and $D(L_{\alpha})=0$ that
\begin{equation*}
  L_{\alpha}^nD(L_{\beta})L_{\alpha}^n=D(L_{z_{\alpha}^n z_{\beta}z_{\alpha}^n})
  =L_{z_{\alpha}^nz_{\beta}z_{\alpha}^n}T_{z_{\alpha}^nz_{\beta}z_{\alpha}^n}
  -T_{z_{\alpha}^nz_{\beta}z_{\alpha}^n}L_{z_{\alpha}^nz_{\beta}z_{\alpha}^n}
\end{equation*}
for some $T_{z_{\alpha}^nz_{\beta}z_{\alpha}^n}\in\mathfrak{L}_{\Lambda}$.
Let $T_{z_{\alpha}^nz_{\beta}z_{\alpha}^n}=L_{\varphi_n}$.
Then
\begin{equation*}
  L_{\alpha}^nD(L_{\beta})L_{\alpha}^n=
  \sum\limits_{w\in\mathbb{F}^+_{\Lambda}} \varphi_n(w)L_{z_{\alpha}^nz_{\beta}z_{\alpha}^nw}-\sum\limits_{w\in\mathbb{F}^+_{\Lambda}} \varphi_n(w)L_{wz_{\alpha}^nz_{\beta}z_{\alpha}^n}.
\end{equation*}
If $\varphi_{n}(w)\ne 0$, then we have
\begin{align*}
    z_{\alpha}^n|wz_{\alpha}^nz_{\beta}z_{\alpha}^n\quad \text{and}\quad z_{\alpha}^n\ddag z_{\alpha}^nz_{\beta}z_{\alpha}^nw,
\end{align*}
i.e.,
\begin{align*}
    wz_{\alpha}^nz_{\beta}z_{\alpha}^n=z_{\alpha}^n w_{1} \quad \text{and}\quad z_{\alpha}^nz_{\beta}z_{\alpha}^n w=w_{2}z_{\alpha}^n
\end{align*}
for some $w_{1}, w_{2} \in \mathbb{F}^+_{\Lambda}$.
It follows that that
\begin{align*}
  w=z_{\alpha}^k\quad\text{for some}~0\leqslant k\leqslant 2n-1
\end{align*}
or
\begin{align*}
  w=z_{\alpha}^nuz_{\alpha}^n\quad\text{for some}~u\in\mathbb{F}^+_{\Lambda}.
\end{align*}
Thus
\begin{equation*}
  D(L_{\beta})=\sum\limits_{u\in\mathbb{F}^+_{\Lambda}}
  \varphi_{n}(z_{\alpha}^nuz_{\alpha}^n)
  (L_{z_{\beta}z_{\alpha}^{2n}u}-L_{uz_{\alpha}^{2n}z_{\beta}})
  +\sum\limits_{k=1}^{2n-1}\varphi_{n}(z_{\alpha}^k)(L_{z_{\beta}z_{\alpha}^k}
  -L_{z_{\alpha}^kz_{\beta}}).
\end{equation*}
Let $D(L_{\beta})=L_{\varphi}$.
Then
\begin{equation*}
  \varphi(z_{\beta}z_{\alpha}^k)=-\varphi(z_{\alpha}^kz_{\beta})
  =\varphi_{n}(z_{\alpha}^k)\quad\text{for}~1\leqslant k\leqslant 2n-1,
\end{equation*}
and
\begin{align*}
    \varphi(w)=0\quad\text{for}~w\notin\{z_{\beta}z_{\alpha}^k, z_{\alpha}^kz_{\beta}\colon k\geqslant 0\}~\text{and}~|w|\leqslant 2n-1.
\end{align*}
By the arbitrariness of $n\geqslant 1$, we have
\begin{equation*}
  \varphi(z_{\beta}z_{\alpha}^k)=-\varphi(z_{\alpha}^kz_{\beta})\quad\text{for}~k\geqslant 1,
\end{equation*}
and
\begin{align*}
  \varphi(w)=0\quad\text{for}~w\notin\{z_{\beta}z_{\alpha}^k, z_{\alpha}^kz_{\beta}\colon k\geqslant 0\}.
\end{align*}
It follows that
\begin{equation}\label{eq:Derivation at b}
  D(L_{\beta})=\sum\limits_{k=1}^{\infty} \varphi(z_{\beta}z_{\alpha}^k)(L_{z_{\beta}z_{\alpha}^k}-L_{z_{\alpha}^kz_{\beta}}).
\end{equation}
Note that $L_{\beta}^*L_{\alpha}=0$.
Let
\begin{align*}
  T_{\beta}:=L_{\beta}^*D(L_{\beta})
  =\sum\limits_{k=1}^{\infty}\varphi(z_{\beta}z_{\alpha}^k)L_{z_{\alpha}^k} \in \mathfrak{L}_{\Lambda}.
\end{align*}
Then
\begin{align*}
  D(L_{\beta})=L_{\beta}T_{\beta}-T_{\beta}L_{\beta}.
\end{align*}
Note that $T_{\beta}L_{\alpha}=L_{\alpha}T_{\beta}$.
By considering $D-D_{T_{\beta}}$, we may assume that $D(L_{\alpha})=D(L_{\beta})=0$.
Suppose that $|\Lambda|\geqslant 3$ and $\gamma\in\Lambda\setminus\{\alpha,\beta \}$.
Let $D(L_{\gamma})=L_{\psi}$.
By \Cref{eq:Derivation at b} we can obtain
\begin{equation*}
  D(L_{\gamma})=\sum\limits_{k=1}^{\infty} \psi(z_{\gamma}z_{\alpha}^k)(L_{z_{\gamma}z_{\alpha}^k}-L_{z_{\alpha}^kz_{\gamma}})
  =\sum\limits_{k=1}^{\infty} \psi(z_{\gamma}z_{\beta}^k)(L_{z_{\gamma}z_{\beta}^k}-L_{z_{\beta}^kz_{\gamma}}).
\end{equation*}
It follows that $D(L_{\gamma})=0$.
Thus for any $w\in\mathbb{F}^+_{\Lambda}$,
\begin{align*}
    D(L_w)=0.
\end{align*}
Since the derivation $D$ is continuous, the desired result is obtained.
\end{proof}

\section{The first normal cohomology groups}\label{sec:normal cohomology}
A derivation of $\mathfrak{L}_{\Lambda}$ is called {\em normal} if it is weak-operator continuous on the closed unit ball $(\mathfrak{L}_{\Lambda})_1$.
Note that every inner derivations of $\mathfrak{L}_{\Lambda}$ is normal.
Let $\mathscr{D}_w(\mathfrak{L}_{\Lambda},\mathfrak{L}_{\Lambda})$ be the space of all normal derivations of $\mathfrak{L}_{\Lambda}$.
We define the {\em first normal cohomology group} of $\mathfrak{L}_{\Lambda}$ as the quotient space:
\begin{equation*}
  \mathscr{H}^1_w(\mathfrak{L}_{\Lambda},\mathfrak{L}_{\Lambda})
  =\frac{\mathscr{ D}_w(\mathfrak{L}_{\Lambda},\mathfrak{L}_{\Lambda})}
  {\mathscr{I}(\mathfrak{L}_{\Lambda},\mathfrak{L}_{\Lambda})}.
\end{equation*}

\begin{theorem}\label{thm:first normal cohomology}
The first normal cohomology group of $\mathfrak{L}_{\Lambda}$ is zero, i.e., $\mathscr{H}^1_w(\mathfrak{L}_{\Lambda},\mathfrak{L}_{\Lambda})=0$.
\end{theorem}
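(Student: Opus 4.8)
The plan is to reduce a normal derivation of $\mathfrak{L}_{\Lambda}$ to the already-settled case of $\mathfrak{A}_{\Lambda}$, and then exploit normality to propagate the conclusion from the weakly dense subalgebra $\mathrm{span}\{L_w\}$ to all of $\mathfrak{L}_{\Lambda}$. Let $D\in\mathscr{D}_w(\mathfrak{L}_{\Lambda},\mathfrak{L}_{\Lambda})$. By \Cref{thm:continuity derivation}, $D$ is automatically norm-continuous, so its restriction to the norm-closed subalgebra $\mathfrak{A}_{\Lambda}$ is a continuous derivation from $\mathfrak{A}_{\Lambda}$ into $\mathfrak{L}_{\Lambda}$. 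By \Cref{thm:first cohomology}, there is $T\in\mathfrak{L}_{\Lambda}$ with $D(A)=AT-TA$ for all $A\in\mathfrak{A}_{\Lambda}$. Replacing $D$ by $D-D_T$, and using that the inner derivation $D_T$ is normal, I may assume that $D$ is a normal derivation vanishing on $\mathfrak{A}_{\Lambda}$; in particular $D(L_w)=0$ for every $w\in\mathbb{F}^+_{\Lambda}$, so $D$ vanishes on $\mathrm{span}\{L_w\colon w\in\mathbb{F}^+_{\Lambda}\}$. It then suffices to show that such a $D$ is identically zero.

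For the propagation step I use the approximation machinery of \S\ref{sec:free semigroup}. Fix $L_{\varphi}\in\mathfrak{L}_{\Lambda}$ with $\|L_{\varphi}\|\leqslant 1$; by linearity it is enough to treat the unit ball. For a finite subset $\mathcal{I}\subseteq\Lambda$ and $k\geqslant 1$, the operator $\mathcal{E}_{\mathcal{I}}(\Sigma_k(L_{\varphi}))$ is a finite linear combination of the $L_w$, hence lies in $\mathrm{span}\{L_w\}$ and is annihilated by $D$. Since $\Sigma_k$ is a completely positive contraction (\Cref{lem: Cesaro operators}) and $\mathcal{E}_{\mathcal{I}}$ is completely contractive (\Cref{lem:conditional expectation}), all these approximants lie in the closed unit ball $(\mathfrak{L}_{\Lambda})_1$. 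By \Cref{lem:conditional expectation}, for fixed $k$ they converge in the strong-operator, hence weak-operator, topology to $\Sigma_k(L_{\varphi})$, which is itself in $(\mathfrak{L}_{\Lambda})_1$; as $D$ is weak-operator continuous on $(\mathfrak{L}_{\Lambda})_1$, I conclude $D(\Sigma_k(L_{\varphi}))=0$. Letting $k\to\infty$ and using that $\Sigma_k(L_{\varphi})\to L_{\varphi}$ strongly within $(\mathfrak{L}_{\Lambda})_1$ (\Cref{lem: Cesaro operators}), normality again yields $D(L_{\varphi})=0$. Hence $D=0$, the original derivation equals $D_T$, and $\mathscr{H}^1_w(\mathfrak{L}_{\Lambda},\mathfrak{L}_{\Lambda})=0$.

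The main obstacle is exactly this final passage from the weakly dense subalgebra to $\mathfrak{L}_{\Lambda}$: a weak-operator dense supply of annihilating elements is not by itself sufficient, because weak-operator continuity is only assumed on bounded sets. The purpose of the Cesàro operators together with the conditional expectations is precisely to produce approximants that are simultaneously in the unit ball and convergent in the strong-operator topology, so that the normality hypothesis can be invoked. The point requiring care is that $\Sigma_k(L_{\varphi})$ is generally an infinite sum when $|\Lambda|=\infty$, which forces the two-stage limit — first over $\mathcal{I}$ to reach $\Sigma_k(L_{\varphi})$ from inside $\mathrm{span}\{L_w\}$, then over $k$ to reach $L_{\varphi}$ — with each stage staying within the unit ball; this is what makes the argument valid for uncountable $\Lambda$.
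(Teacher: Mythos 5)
Your proposal is correct and follows essentially the same route as the paper's proof: restrict the normal derivation to $\mathfrak{A}_{\Lambda}$, invoke \Cref{thm:first cohomology} to get an implementing $T\in\mathfrak{L}_{\Lambda}$, and then use the two-stage approximation by $\mathcal{E}_{\mathcal{I}}(\Sigma_k(L_{\varphi}))$ inside the unit ball together with normality to pass from $\mathrm{span}\{L_w\}$ to all of $\mathfrak{L}_{\Lambda}$. The only cosmetic difference is that you subtract $D_T$ first and show the remainder vanishes, while the paper carries $D_T$ along and verifies $D=D_T$ directly.
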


\begin{proof}
Let $D$ be a normal derivation on $\mathfrak{L}_{\Lambda}$.
Then the restriction of $D$ on $\mathfrak{A}_{\Lambda}$,
\begin{equation*}
  D|_{\mathfrak{A}_{\Lambda}}\colon \mathfrak{A}_{\Lambda}\to\mathfrak{L}_{\Lambda},
\end{equation*}
is a norm continuous derivation.
By \Cref{thm:first cohomology}, there exists an operator $T\in\mathfrak{L}_{\Lambda}$ such that
\begin{equation*}
  D(L_{\varphi})=L_{\varphi}T-TL_{\varphi},\quad\forall L_{\varphi}\in\mathfrak{A}_{\Lambda}.
\end{equation*}
Let $L_{\varphi}\in\mathfrak{L}_{\Lambda}$, $k\geqslant 1$ and $\mathcal{I}\subseteq\Lambda$ a finite set.
By \Cref{lem: Cesaro operators} and \Cref{prop: two algebras same}, we have the following three statements:
\begin{enumerate}[(i)]
\item $\mathcal{E}_{\mathcal{I}}(\Sigma_k(L_{\varphi}))$ is a finite sum; in particular, $\mathcal{E}_{\mathcal{I}}(\Sigma_k(L_{\varphi}))\in\mathfrak{A}_{\Lambda}$.

\item $\|\mathcal{E}_{\mathcal{I}}(\Sigma_k(L_{\varphi}))\|\leqslant \|\Sigma_k(L_{\varphi})\|$ and $\|\Sigma_k(L_{\varphi})\|\leqslant \|L_{\varphi}\|$.

\item $\lim_{\mathcal{I}}\mathcal{E}_{\mathcal{I}}(\Sigma_k(L_{\varphi}))
    =\Sigma_k(L_{\varphi})$ and $\lim_{k\to\infty}\Sigma_k(L_{\varphi})=L_{\varphi}$ both in the strong-operator topology.
\end{enumerate}
Therefore,
\begin{align*}
  D(\Sigma_k(L_{\varphi})) &=D(\lim_{\mathcal{I}}\mathcal{E}_{\mathcal{I}}(\Sigma_k(L_{\varphi})))
  =\lim_{\mathcal{I}}D(\mathcal{E}_{\mathcal{I}}(\Sigma_k(L_{\varphi})))\\
  &=\lim_{\mathcal{I}}D_T(\mathcal{E}_{\mathcal{I}}(\Sigma_k(L_{\varphi})))
  =D_T(\Sigma_k(L_{\varphi})),
\end{align*}
and
\begin{align*}
  D(L_{\varphi})=D(\lim_{k\to\infty}\Sigma_k(L_{\varphi}))
  =\lim_{k\to\infty}D(\Sigma_k(L_{\varphi}))
  =\lim_{k\to\infty}D_T(\Sigma_k(L_{\varphi}))=D_T(L_{\varphi}).
\end{align*}
We obtain that $D=D_T$ on $\mathfrak{L}_{\Lambda}$.
\end{proof}

In \Cref{thm:continuity derivation}, we have proved that all derivations of $\mathfrak{L}_{\Lambda}$ are continuous.
Kadison \cite{Kad66} proved that all derivations of a $C^*$-algebra are normal.
We propose the following question.
\begin{question}
Whether all derivations of $\mathfrak{L}_{\Lambda}$ are normal?
\end{question}

\section{The higher cohomology groups}\label{sec:higher order cohomology}
Let $\mathcal{B}$ be a unital Banach algebra over $\mathbb{C}$ and $\mathcal{X}$ a Banach $\mathcal{B}$-bimodule.
We denote the space of all $n$-linear continuous maps from the $n$-fold Cartesian product $\mathcal{B}^n$ into $\mathcal{X}$ by $\mathcal{L}^{n}(\mathcal{B},\mathcal{X})$ for $n\geqslant1$.
In this notation, $\mathcal{L}^{0}(\mathcal{B},\mathcal{X})$ is defined to be $\mathcal{X}$.
The coboundary operator $\partial^n\colon\mathcal{L}^{n}(\mathcal{B},\mathcal{X})
\to\mathcal{L}^{n+1}(\mathcal{B},\mathcal{X})$ is defined, for $n\geqslant1$, by
\begin{align*}
  \partial^n\phi(a_{1},a_{2},\ldots,a_{n+1})=&\ a_{1}\phi(a_{2},\ldots,a_{n+1})\\
  &+\sum_{i=1}^{n}(-1)^{i}
  \phi(a_1,\ldots,a_{i-1},a_ia_{i+1},a_{i+2},\ldots,a_{n+1})\\
  &+(-1)^{n+1}\phi(a_{1},\ldots,a_{n})a_{n+1},
\end{align*}
where $\phi\in\mathcal{L}^{n}(\mathcal{B},\mathcal{X})$ and $a_1,a_2,\ldots,a_{n+1}\in\mathcal{B}$.
When $n=0$, we define $\partial^0$ by
\begin{equation*}
  \partial^0 x(m)=mx-xm\quad (x\in\mathcal{X},\ m\in\mathcal{B}).
\end{equation*}
Since $\partial^{n}\partial^{n-1}=0$ for all $n\geqslant 1$, the image $\mathscr{I}(\partial^{n-1})$ is a linear subspace of
the kernel $\mathscr{K}(\partial^n)$.
An $n$-linear map $\phi\in\mathcal{L}^{n}(\mathcal{B},\mathcal{X})$ is called an $n$-cocycle if $\partial^n\phi=0$, and is called an $n$-coboundary if $\phi=\partial^{n-1}\psi$ for some $\psi\in\mathcal{L}^{n-1}(\mathcal{B},\mathcal{X})$.
 The {\em $n^{th}$ continuous Hochschild cohomology group} is defined as
\begin{align*}
  \mathscr{H}^{n}(\mathcal{B},\mathcal{X})
  =\frac{\mathscr{K}(\partial^n)}{\mathscr{I}(\partial^{n-1})}.
\end{align*}
 In particular, $\mathscr{H}^{1}(\mathcal{B},\mathcal{X})$ is the first continuous cohomology group studied in \S \ref{sec:first cohomology}.
We consider the complex field $\mathbb{C}$ as a Banach bimodule of the non-commutative disc algebra $\mathfrak{A}_{\Lambda}$ as follows:
For any $\gamma\in\mathbb{C}$ and $L_{\varphi}\in \mathfrak{A}_{\Lambda}$, we set
\begin{align*}
  \gamma\cdot L_{\varphi}=\gamma \varphi(e)\quad\text{and}\quad L_{\varphi}\cdot\gamma =\varphi(e)\gamma.
\end{align*}
On the other hand, for any $\alpha\in\Lambda$, we define
\begin{align*}
  \Upsilon_{\alpha}(L_{\varphi})=\sum_{z_{\alpha}|w}\varphi(w)L_w.
\end{align*}

\begin{lemma}\label{lem:Upsilon norm}
The linear operator $\Upsilon_{\alpha}$ is bounded with $\|\Upsilon_{\alpha}\|=2$.
\end{lemma}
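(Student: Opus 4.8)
The plan is to realize $\Upsilon_\alpha$ as a simple combination of left multiplication operators, from which the boundedness and the exact value $\|\Upsilon_\alpha\|=2$ both drop out. Writing $P_\alpha=L_\alpha L_\alpha^*$ for the projection onto $\ell^2(z_\alpha\mathbb{F}^+_\Lambda)$, I would first establish, for every $L_\varphi\in\mathfrak{A}_\Lambda$, the identity
\[
\Upsilon_\alpha(L_\varphi)=P_\alpha L_\varphi-\varphi(e)P_\alpha .
\]
To verify this I evaluate both sides on a basis vector $\xi_v$: the left side is $\sum_{z_\alpha\mid w}\varphi(w)\xi_{wv}$, while $P_\alpha L_\varphi\xi_v=\sum_{z_\alpha\mid wv}\varphi(w)\xi_{wv}$. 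For $w\ne e$ one has $z_\alpha\mid wv$ if and only if $z_\alpha\mid w$, so the two families of words differ only in the term $w=e$, which contributes $\varphi(e)\xi_v$ precisely when $z_\alpha\mid v$; this surplus is exactly $\varphi(e)P_\alpha\xi_v$. Since the right-hand side is manifestly a bounded operator, this identity simultaneously shows that $\Upsilon_\alpha$ is well defined and bounded.

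For the upper estimate I would use $\|P_\alpha\|=1$ together with $|\varphi(e)|=|\langle L_\varphi\xi_e,\xi_e\rangle|\le\|L_\varphi\|$ to obtain
\[
\|\Upsilon_\alpha(L_\varphi)\|\le\|P_\alpha L_\varphi\|+|\varphi(e)|\,\|P_\alpha\|\le\|L_\varphi\|+|\varphi(e)|\le 2\|L_\varphi\|,
\]
so that $\|\Upsilon_\alpha\|\le 2$.

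The \emph{harder} half is the lower bound, and the natural device is a Blaschke factor in the single isometry $L_\alpha$. For $0<r<1$ set $f_r(z)=(z-r)/(1-rz)$ and
\[
L_{\varphi_r}:=f_r(L_\alpha)=(L_\alpha-rI)\sum_{n\ge0}r^nL_\alpha^n\in\mathfrak{A}_\Lambda,
\]
the series converging in norm. Since $L_\alpha$ is a contraction, von Neumann's inequality gives $\|L_{\varphi_r}\|\le\|f_r\|_\infty=1$. Here $\varphi_r$ is supported on the powers $\{z_\alpha^n\}$ with $\varphi_r(e)=f_r(0)=-r$, so the identity above collapses to $\Upsilon_\alpha(L_{\varphi_r})=g_r(L_\alpha)$ with $g_r(z)=f_r(z)-f_r(0)=(1-r^2)z/(1-rz)$. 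To bound $\|g_r(L_\alpha)\|$ from below I exploit that $L_\alpha$ is a pure isometry: for $\lambda\in\mathbb{D}$ the vector $k_\lambda=\sum_{n\ge0}\bar\lambda^n\xi_{z_\alpha^n}$ satisfies $L_\alpha^*k_\lambda=\bar\lambda k_\lambda$, whence $g_r(L_\alpha)^*k_\lambda=\overline{g_r(\lambda)}\,k_\lambda$ and therefore $\|g_r(L_\alpha)\|\ge|g_r(\lambda)|$. Letting $\lambda\to1^-$ gives $\|\Upsilon_\alpha(L_{\varphi_r})\|\ge g_r(1)=1+r$, hence $\|\Upsilon_\alpha\|\ge(1+r)/\|L_{\varphi_r}\|\ge1+r$; sending $r\to1^-$ yields $\|\Upsilon_\alpha\|\ge2$, and together with the upper bound this gives $\|\Upsilon_\alpha\|=2$. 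The step needing the most care is precisely this lower bound: one must confirm that the functional calculus for the isometry $L_\alpha$ is isometric with the disc algebra, which is exactly what the eigenvectors $k_\lambda$ (approximating boundary characters) and von Neumann's inequality together provide.
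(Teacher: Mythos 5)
Your proposal is correct and follows essentially the same route as the paper: the same identity $\Upsilon_\alpha(L_\varphi)=L_\alpha L_\alpha^*(L_\varphi-\varphi(e)I)$ gives the upper bound $2$, and the lower bound comes from the same family of M\"obius transformations evaluated at the single isometry $L_\alpha$. The only difference is that you justify the isometric identification $\|f(L_\alpha)\|=\sup_{\mathbb{D}}|f|$ explicitly (via von Neumann's inequality and the eigenvectors $k_\lambda$ of $L_\alpha^*$), where the paper simply invokes that $\mathfrak{A}_\Lambda$ contains a copy of the disc algebra.
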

\begin{proof}
Since
\begin{equation*}
  \Upsilon_{\alpha}(L_{\varphi})=L_{\alpha}L_{\alpha}^*(L_{\varphi}-\varphi(e)L_e)
  =L_{\alpha}L_{\alpha}^*(L_{\varphi}-\Phi_0(L_{\varphi})),
\end{equation*}
we have $\|\Upsilon_{\alpha}(L_{\varphi})\|\leqslant2\|L_{\varphi}\|$.
Since $\mathfrak{A}_{\Lambda}$ contains a copy of the classical disc algebra $A(\mathbb{D})$, we only need to prove the opposite inequality $\|\Upsilon_{\alpha}\|\geqslant 2$ for $A(\mathbb{D})$.
Consider the M\"{o}bius transformation
\begin{equation*}
  f(z)=\frac{c-z}{1-\overline{c}z}=\sum_{n=0}^{\infty}c_nz^n\in A(\mathbb{D}),\quad |c|<1.
\end{equation*}
Since $L_f$ is the Toeplitz operator of $f$, we have
\begin{equation*}
  \|L_{f}\|=\sup_{z\in\mathbb{D}}|f(z)|=1,\quad\text{and}\quad
  \|\Upsilon_{\alpha}(L_{f})\|=\sup_{z\in\mathbb{D}}|f(z)-f(0)|=1+|c|.
\end{equation*}
Then $\|\Upsilon_{\alpha}\|\geqslant 1+|c|$ for all $|c|<1$.
This completes the proof.
\end{proof}

Next we define the cutting operations on $\mathbb{F}_{\Lambda}^+$ by
\begin{align*}
  \chi(w)=\begin{cases}
  z_{\alpha_1},& w=z_{\alpha_1}z_{\alpha_2}\cdots z_{\alpha_n},n\geqslant 1,\\
  e,&\mbox{otherwise},
  \end{cases}
\end{align*}
and
\begin{align*}
  \Omega(w)=\begin{cases}
  z_{\alpha_2}\cdots z_{\alpha_n},&w=z_{\alpha_1}z_{\alpha_2}\cdots z_{\alpha_n},n\geqslant 1,\\
  e,&\mbox{otherwise}.
  \end{cases}
\end{align*}
It is clear that $w=\chi(w)\Omega(w)$ for all $w\in\mathbb{F}_{\Lambda}^+$.

\begin{theorem}\label{thm:higher order}
Suppose that $|\Lambda|<\infty$.
The higher cohomology groups of $\mathfrak{A}_{\Lambda}$ with coefficients in $\mathbb{C}$ are zero, i.e., $\mathscr{H}^n(\mathfrak{A}_{\Lambda},\mathbb{C})=0$ for any $n\geqslant 2$.
\end{theorem}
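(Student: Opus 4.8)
The plan is to produce the cochain $\psi$ by a single \emph{first-letter cutting} and to verify $\phi=\partial^{n-1}\psi$ from the cocycle identity, boundedness coming for free from $|\Lambda|<\infty$. Write $\epsilon(L_\varphi)=\varphi(e)$ for the character implementing the bimodule action on $\mathbb{C}$; it is multiplicative since $(\varphi*\psi)(e)=\varphi(e)\psi(e)$, and $\epsilon(L_\alpha)=0$ for every $\alpha$. The engine of the argument is the decomposition of an arbitrary $a=L_\varphi\in\mathfrak{A}_\Lambda$ according to the first letter of each word, namely the regrouping of $a=\sum_{w}\varphi(w)L_{\chi(w)}L_{\Omega(w)}$ into
\[
 a=\epsilon(a)L_e+\sum_{\alpha\in\Lambda}L_\alpha\,T_\alpha(a),\qquad T_\alpha(a):=\sum_{\chi(w)=z_\alpha}\varphi(w)\,L_{\Omega(w)}.
\]
Since $T_\alpha=L_\alpha^*\Upsilon_\alpha$, \Cref{lem:Upsilon norm} gives a bounded map $T_\alpha\colon\mathfrak{A}_\Lambda\to\mathfrak{A}_\Lambda$ with $\|T_\alpha\|\le 2$, and a direct check gives the twisted Leibniz rule $T_\alpha(ab)=T_\alpha(a)\,b+\epsilon(a)\,T_\alpha(b)$ together with $T_\alpha(L_e)=0$.

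Given an $n$-cocycle $\phi$ with $n\ge 2$, I would set
\[
 \psi(a_1,\dots,a_{n-1}):=-\sum_{\alpha\in\Lambda}\phi\big(L_\alpha,\,T_\alpha(a_1),\,a_2,\dots,a_{n-1}\big).
\]
Because $|\Lambda|<\infty$ this is a finite sum with $\|\psi\|\le 2|\Lambda|\,\|\phi\|$, so $\psi\in\mathcal{L}^{n-1}(\mathfrak{A}_\Lambda,\mathbb{C})$; this is precisely the step where finiteness is essential, since the corresponding series need not converge when $|\Lambda|=\infty$. Conceptually $-\psi=s^n\phi$ is one leg of a contracting homotopy $s^n\colon\mathcal{L}^n\to\mathcal{L}^{n-1}$ given by the same formula for all $n\ge 2$, and the theorem reduces to the homotopy identity
\[
 \partial^{n-1}(s^n\phi)+s^{n+1}(\partial^n\phi)=-\phi+\epsilon(a_1)\,\phi(L_e,a_2,\dots,a_n).
\]

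To establish this identity I would expand both terms and use the cutting relations. In $s^{n+1}(\partial^n\phi)$ the leading summand drops out because $\epsilon(L_\alpha)=0$, and the decisive summand is $-\sum_\alpha\phi(L_\alpha T_\alpha(a_1),a_2,\dots,a_n)=-\phi\big(a_1-\epsilon(a_1)L_e,a_2,\dots,a_n\big)$, which yields the desired $-\phi$ plus the spurious unit term. All remaining summands cancel in pairs: the two families of ``merge'' terms $\phi(L_\alpha,T_\alpha(a_1),\dots,a_ia_{i+1},\dots)$ cancel after an index shift; the two boundary terms carrying $\epsilon(a_n)$ cancel by opposite signs; and the terms $-\phi(L_\alpha,T_\alpha(a_1a_2),\dots)$ and $+\phi(L_\alpha,T_\alpha(a_1)a_2,\dots)$ combine, via the twisted Leibniz rule, with the term $\epsilon(a_1)\phi(L_\alpha,T_\alpha(a_2),\dots)$ to give zero. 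Specializing to a cocycle kills $s^{n+1}(\partial^n\phi)$, leaving $\partial^{n-1}(s^n\phi)=-\phi+\epsilon(a_1)\phi(L_e,a_2,\dots,a_n)$.

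The one genuine nuisance is the unit term $\epsilon(a_1)\phi(L_e,a_2,\dots,a_n)$, and I expect this normalization point, rather than the (bounded) bookkeeping above, to be the main obstacle. I would remove it by passing to the \emph{normalized} continuous cochain complex, whose cochains vanish whenever an entry equals $L_e$ and which computes the same cohomology $\mathscr{H}^n(\mathfrak{A}_\Lambda,\mathbb{C})$; since $T_\alpha(L_e)=0$ one checks that $s^n$ preserves normalized cochains, so on this complex the identity reads $\partial^{n-1}(s^n\phi)=-\phi$ for cocycles $\phi$. Hence for $n\ge 2$ every normalized cocycle satisfies $\phi=\partial^{n-1}\psi$ with $\psi=-s^n\phi$ bounded, giving $\mathscr{H}^n(\mathfrak{A}_\Lambda,\mathbb{C})=0$. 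It is reassuring that the construction genuinely breaks down at $n=1$, where $s^1$ is undefined because there is no first argument to cut, so nothing forces $\mathscr{H}^1$ to vanish --- consistent with Popescu's computation $\mathscr{H}^1(\mathfrak{A}_\Lambda,\mathbb{C})\cong\mathbb{C}^{|\Lambda|}$.
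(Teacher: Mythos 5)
Your proposal is correct and follows essentially the same route as the paper: the same first-letter cutting, the same operator $T_\alpha=L_\alpha^*\Upsilon_\alpha$ bounded via \Cref{lem:Upsilon norm}, and the same formula $\psi(a_1,\dots,a_{n-1})=-\sum_{\alpha\in\Lambda}\phi(L_\alpha,L_\alpha^*\Upsilon_\alpha(a_1),a_2,\dots,a_{n-1})$, with $|\Lambda|<\infty$ used in exactly the same place to get boundedness. The only differences are organizational: the paper absorbs the unit term by adding $\varphi_1(e)\phi(L_e,L_e,a_2,\dots,a_{n-1})$ to $\psi$ and checks $\partial^{n-1}\psi=\phi$ directly on the generators $L_{w}$, whereas you package the computation as a contracting homotopy and dispose of the unit term by invoking the (standard, but external) reduction to the normalized continuous cochain complex.
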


\begin{proof}
Let $\phi$ be an $n$-cocyle and $w_1,w_2,\ldots,w_n\in\mathbb{F}_{\Lambda}^+$.
If $w_1\ne e$, then
\begin{align*}
  0&=\partial^n\phi(L_{\chi(w_1)},L_{\Omega(w_1)},L_{w_2},\ldots,L_{w_n})\\
  &=L_{\chi(w_1)}\phi(L_{\Omega(w_1)},L_{w_2},\ldots,L_{w_n})\\
  &\quad-\phi(L_{w_1},L_{w_2},\ldots,L_{w_n})
  +\phi( L_{\chi(w_1)},L_{\Omega(w_1)w_2},L_{w_3},\ldots,L_{w_n})\\
  &\quad+\sum_{i=2}^{n-1}(-1)^{i+1}
  \phi(L_{\chi(w_1)},L_{\Omega(w_1)},L_{w_2},\ldots,L_{w_iw_{i+1}},\ldots,L_{w_{n}})\\
  &\quad+(-1)^{n+1}
  \phi(L_{\chi(w_1)},L_{\Omega(w_1)},L_{w_2},\ldots,L_{w_{n-1}})L_{w_n}\\
  &=-\phi(L_{w_1},L_{w_2},\ldots,L_{w_n})
  +\phi( L_{\chi(w_1)},L_{\Omega(w_1)w_2},L_{w_3},\ldots,L_{w_n})\\
  &\quad+\sum_{i=2}^{n-1}(-1)^{i+1}
  \phi(L_{\chi(w_1)},L_{\Omega(w_1)},L_{w_2},\ldots,L_{w_iw_{i+1}},\ldots,L_{w_{n}})\\
  &\quad+(-1)^{n+1}
  \phi(L_{\chi(w_1)},L_{\Omega(w_1)},L_{w_2},\ldots,L_{w_{n-1}})L_{w_n}.
\end{align*}
It follows that
\begin{align*}
  \phi(L_{w_1},L_{w_2},\ldots,L_{w_n})
  &=\phi( L_{\chi(w_1)},L_{\Omega(w_1)w_2},L_{w_3},\ldots,L_{w_n})\\
  &\quad+\sum_{i=2}^{n-1}(-1)^{i+1}
  \phi(L_{\chi(w_1)},L_{\Omega(w_1)},L_{w_2},\ldots,L_{w_iw_{i+1}},\ldots,L_{w_{n}})\\
  &\quad+(-1)^{n+1}
  \phi(L_{\chi(w_1)},L_{\Omega(w_1)},L_{w_2},\ldots,L_{w_{n-1}})L_{w_n}.
\end{align*}
Similarly, we have
\begin{align*}
  \phi(L_e,L_{w_2},\ldots,L_{w_n})
  &=-\sum_{i=2}^{n-1}(-1)^{i+1}
  \phi(L_e,L_e,L_{w_2},\ldots,L_{w_iw_{i+1}},\ldots,L_{w_{n}})\\
  &\quad-(-1)^{n+1}\phi(L_e,L_e,L_{w_2},\ldots,L_{w_{n-1}})L_{w_n}.
\end{align*}
Suppose that $\psi$ is an $(n-1)$-linear map such that
\begin{equation*}
  \psi(L_{w_1},L_{w_2},\ldots,L_{w_{n-1}})=
  \begin{cases}
    -\phi(L_{\chi(w_1)},L_{\Omega(w_1)},L_{w_2},\ldots,L_{w_{n-1}}), & w_1\ne e, \\
    \phi(L_e,L_e,L_{w_2},\ldots,L_{w_{n-1}}), & w_1=e.
  \end{cases}
\end{equation*}
By a direct computation, we have
\begin{equation*}
  \partial^{n-1}\psi(L_{w_1},\ldots,L_{w_n})=\phi(L_{w_1},\ldots,L_{w_n}).
\end{equation*}
On the other hand, the $(n-1)$-linear map $\psi$ is given by
\begin{align*}
  &\quad\psi(L_{\varphi_1},L_{\varphi_2},\ldots,L_{\varphi_{n-1}}) \\
  &=-\sum_{\alpha\in\Lambda}
  \phi(L_{\alpha},L_{\alpha}^*\Upsilon_{\alpha}(L_{\varphi_1}),
  L_{\varphi_2},\ldots,L_{\varphi_{n-1}})
  +\varphi_1(e)\phi(L_e,L_e,L_{\varphi_2},\ldots,L_{\varphi_{n-1}}).
\end{align*}
It follows from \Cref{lem:Upsilon norm} that
\begin{equation*}
  \|\psi(L_{\varphi_1},L_{\varphi_2},\ldots,L_{\varphi_{n-1}})\|\leqslant (2|\Lambda|+1)\|\phi\|\prod_{i=1}^{n-1}\|L_{\varphi_i}\|,
\end{equation*}
and hence $\psi$ is continuous.
Therefore, $\partial^{n-1}\psi=\phi$.
\end{proof}

\begin{remark}
For the first cohomology group, Popescu \cite[Theorem 4.1]{Pop96} proved that $\mathscr{H}^1(\mathfrak{A}_{\Lambda},\mathbb{C})\cong (\mathbb{C}^{|\Lambda|},+)$ when $|\Lambda|<\infty$ and $\mathscr{H}^1(\mathfrak{A}_{\Lambda},\mathbb{C})\cong (\ell^2(\Lambda),+)$ when $|\Lambda|=\infty$.
\end{remark}

We end this paper by proposing the following question.
\begin{question}
When we choose a different Banach bimodule $\mathcal{X}$, with $\mathcal{X}=\mathfrak{L}_{\Lambda}$ as an important example, what are the higher cohomology groups $\mathscr{H}^n(\mathfrak{A}_{\Lambda},\mathcal{X}), n\geqslant 2$?
\end{question}


\begin{thebibliography}{99}
\bibitem{AP95}
A. Arias, G. Popescu, Factorization and reﬂexivity on Fock spaces, {\em Integral Equations Operator Theory} \textbf{23} (1995), 268--286.

\bibitem{AP00}
\bysame, Noncommutative interpolation and Poisson transforms, {\em Israel J. Math.} \textbf{115} (2000), 205--234.

\bibitem{BC74}
W. G. Bade and P. C. Curtis, JR., The continuity of derivations of Banach algebras, {\em J. Funct. Anal.}, \textbf{16} (1974), 372--387.

\bibitem{CPSS03}
E. Christensen, F. Pop, A. M. Sinclair and R. R. Smith, Hochschild cohomology of factors with property $\Gamma$, {\em Ann. of Math. (2)} \textbf{158} (2003), 635--659.

\bibitem{CMT22}
R. Clou$\hat{\text{a}}$tre, R. T.W. Martin, E. J. Timko, Analytic functionals for the non-commutative disc algebra, {\em J. Funct. Anal.} \textbf{282} (2022), No. 109306, 32 pp.

\bibitem{DLP05}
K.R. Davidson, J. Li, D.R. Pitts, Absolutely continuous representations and a Kaplansky density theorem for free semigroup algebras, {\em J. Funct. Anal.} \textbf{224} (2005), 160--191.

\bibitem{DKP01}
K.R. Davidson, E. Katsoulis, D.R. Pitts, The structure of free semigroup algebras, {\em J. Reine Angew. Math. (Crelle)} \textbf{533} (2001), 99--125.

\bibitem{DP98a}
K.R. Davidson, D.R. Pitts, The algebraic structure of non-commutative analytic Toeplitz algebras, {\em Math. Ann.} \textbf{311} (1998), 275--303.

\bibitem{DP98b}
\bysame, Nevanlinna–Pick Interpolation for non-commutative analytic Toeplitz algebras, {\em Integral Equations Operator Theory} \textbf{31} (1998) 321--337.

\bibitem{DP99}
\bysame, Invariant subspaces and hyper-reflexivity for free semigroup algebras, {\em Proc. London Math. Soc. (3)} \textbf{78} (1999), 401--430.

\bibitem{GMQ}
L. Ge, M. Ma and B. Qi, Riemann $\zeta$-functions on semigroups (in Chinese). {\em Sci. Sin. Math.}, \textbf{51} (2021), 1545--1578.

\bibitem{Hua21}
L. Huang, Thompson's semigroup and the first Hochschild cohomology, {\em Canad. J. Math.}, 2024, 1--21.

\bibitem{Joh72}
B. E. Johnson, {\em Cohomology in Banach Algebras}, Mem. Am. Math. Soc.,  Amer. Math. Soc., 1972.

\bibitem {J} \bysame,
A class of {$\mathrm{II}_1$} factors without property {$\mathrm{P}$} but with zero second cohomology,
   {\em Ark. Mat.}
  \textbf{12} (1974), 153--159.

\bibitem {JKR}
    B. E. Johnson, R. V. Kadison and J. R. Ringrose,
    Cohomology of operator algebras $\mathrm{III}$. Reduction to normal cohomology,
    {\em Bull. Soc. Math. France}
  \textbf{100} (1972), 73--96.

\bibitem{JS68}
B. E. Johnson and A. M. Sinclair, Continuity of derivations and a problem of Kaplansky, {\em Amer. J. Math.}, \textbf{90} (1968), 1067--1073.

\bibitem{Kad66}
R. V. Kadison, Derivations of operator algebras, {\em Ann. of Math.} \textbf{83} (1966), 280–293.

\bibitem{KL14}
R. V. Kadison and Z. Liu, A note on derivations of Murray--von Neumann algebras, {\em Proc. Natl. Acad. Sci.}, \textbf{111} (2014), 2087--2093.

\bibitem {KR} R. V. Kadison and J. R. Ringrose,
Cohomology of operator algebras I: Type I von Neumann algebras,
    {\em  Acta Math.}
  \textbf{126} (1971), 227--243.

\bibitem{KR2} \bysame,
     Cohomology of operator algebras II: Extended cobounding and the hyperfinite case,
    {\em Ark. Mat.}
   \textbf{9} (1971), 55--63.

\bibitem{Kadison}
\bysame, {\em Fundamentals of the Theory of Operator Algebras, Vols. I and II}, Academic
 Press, Orlando, 1983 and 1986.

\bibitem{Kap53}
I. Kaplansky, Modules over operator algebras, {\em Amer. J. Math.}, \textbf{75} (1953), 839--859.

\bibitem{Ken13}
M. Kennedy, The structure of an isometric tuple, {\em Proc. Lond. Math. Soc. (3)}, \textbf{106} (2013), 1157--1177.

\bibitem{PS10}
F. Pop and R. R. Smith, Vanishing of second cohomology for tensor products of type $\mathrm{II}_1$ von Neumann
algebras, {\em J. Funct. Anal.}, \textbf{258} (2010), 2695--2707.

\bibitem{Pop89a}
G. Popescu, Characteristic functions for inﬁnite sequences of noncommuting operators, {\em J. Operator Theory} \textbf{22} (1989), 51--71.

\bibitem{Pop89b}
\bysame, Multi-analytic operators and some factorization theorems, {\em Indiana Univ. Math. J.} \textbf{38} (1989), 693--710.

\bibitem{Pop91}
\bysame, Von Neumann Inequality for $(B(\mathcal{H})^n)_1$, {\em Math. Scand.} \textbf{68} (1991), 292--304.

\bibitem{Pop95}
\bysame, Multi-analytic operators on Fock spaces, {\em Math. Ann.}, \textbf{303} (1995), 31--46.

\bibitem{Pop96}
\bysame, Non-commutative disc algebras and their representations, {\em Proc. Amer. Math. Soc.} \textbf{124} (1996), 2137--2148.

\bibitem{Sak60}
S. Sakai, On a conjecture of Kaplansky, {\em Tohoku Math. J. (2)},  \textbf{12} (1960), 31--33.

\bibitem{Sak66}
  \bysame, Derivations of $W^*$-Algebras, {\em Ann. of Math. (2)}, \textbf{83} (1966), 273--279.

\bibitem{SS95} A. M. Sinclair and  R. R. Smith, {\em Hochschild cohomology of von Neumann algebras}, Cambridge University Press, 1995.

\bibitem{SS} \bysame, Hochschild cohomology for von Neumann algebras with Cartan subalgebras, {\em Amer. J. Math.} \textbf{120} (1998), 1043--1057.

\bibitem{Xue21}
B. Xue, Non-commutative arithmetics on Thompson's monoid, {\em Acta Math. Sin. (Engl. Ser.)}, \textbf{37} (2021), 1586--1626.
\end{thebibliography}
\end{document}